\documentclass[reqno]{amsart}     
\frenchspacing 
\sloppy 

\usepackage{amsmath}
\usepackage{amsthm}
\usepackage{mathrsfs}
 
\usepackage{a4wide} 
\usepackage[T1]{fontenc}
\usepackage[latin1]{inputenc}
\usepackage{marginnote}
\usepackage{tikz}

\usepackage{xcolor}
\usepackage{caption}
\usepackage{hyperref}
\usepackage{array}
\usepackage{graphicx}


\renewcommand{\(}{\left(}
\renewcommand{\)}{\right)}
\newtheorem{thm}{Theorem}
\newtheorem{prop}{Proposition}
\newtheorem{lemma}{Lemma}
\newtheorem{cor}{Corollary\!\!}

\newtheorem{ncor}{Corollary}


\theoremstyle{definition}

\newtheorem{df}{Definition}
\newtheorem{ex}{Example}

\theoremstyle{remark}

\newtheorem{rem}{Remark\!\!}
 
\newtheorem{nrem}{Remark}

\newcommand{\bth}{\begin{thm}} 
\newcommand{\bl}{\begin{lemma}} 
\newcommand{\el}{\end{lemma}} 
\newcommand{\bp}{\begin{prop}} 
\newcommand{\ep}{\end{prop}} 
\newcommand{\bdf}{\begin{df}} 
\newcommand{\edf}{\end{df}} 
\newcommand{\brem}{\begin{rem}} 
\newcommand{\erem}{\end{rem}} 
\newcommand{\bnrem}{\begin{nrem}} 
\newcommand{\enrem}{\end{nrem}} 
\newcommand{\bex}{\begin{ex}} 
\newcommand{\eex}{\end{ex}} 
\newcommand{\bcor}{\begin{cor}} 
\newcommand{\ecor}{\end{cor}} 
\newcommand{\bncor}{\begin{ncor}} 
\newcommand{\encor}{\end{ncor}} 
\newcommand{\bpf}{\begin{proof}} 
\newcommand{\epf}{\end{proof}}


\begin{document}

\title{Protection numbers in simply generated trees and P\'olya trees} 
\author[B. Gittenberger, Z. Go{\l}\k{e}biewski, I. Larcher, M. Sulkowska]{Bernhard Gittenberger,
Zbigniew Go{\l}\k{e}biewski, \\ Isabella Larcher, \and Ma{\l}gorzata Sulkowska} 
\thanks{This research has been supported by the \"OAD, grant PL04-2018, and 
Wroclaw Univeristy of Science and Technology grant 0401/0052/18. The first and third author 
have also been supported by the Austrian Science Fund (FWF), grant SFB F50-03.} 
\address{Department of Discrete Mathematics and Geometry, Technische  
Universit\"at Wien, Wiedner Hauptstra\ss e 8-10/104, A-1040 Wien, Austria.}
\email{gittenberger@dmg.tuwien.ac.at} 
\address{
Department of Computer Science, Wroc{\l}aw University of Science and Technology, ul. Wybrzeze
Wyspianskiego 27, 50-370, Wroc\l aw, Poland.}
\email{zbigniew.golebiewski@pwr.edu.pl}
\address{Department of Discrete Mathematics and Geometry, Technische
Universit\"at Wien, Wiedner Hauptstra\ss e 8-10/104, A-1040 Wien, Austria.}
\email{isabella.larcher@tuwien.ac.at}
\address{
Department of Computer Science, Wroc{\l}aw University of Science and Technology, ul. Wybrzeze
Wyspianskiego 27, 50-370, Wroc\l aw, Poland.}
\email{malgorzata.sulkowska@pwr.edu.pl}


\date{\today}

\begin{abstract} 
We determine the limit of the expected value and the variance of the protection number of the
root in simply generated trees, in P\'olya trees, and in unlabelled non-plane binary trees, when
the number of vertices tends to infinity.  Moreover, we compute expectation and variance of the
protection number of a randomly chosen vertex in all those tree classes. We obtain exact formulas
as sum representations, where the obtained sums are rapidly converging and therefore allowing an
efficient numerical computation of high accuracy. Most proofs are based on a singularity analysis
of generating functions.
\end{abstract} 
 
\maketitle

\section{Introduction}

{\it The protection number of a tree} is the length of the shortest path from the root to a leaf.
It is interchangeably called {\it the protection number of a root}. We define {\it the protection
number of a vertex $v$} in tree $T$ as the protection number of a maximal subtree of $T$ having
$v$ as a root. We say that a vertex is {\it $k$-protected} if $k$ does not exceed its protection
number.

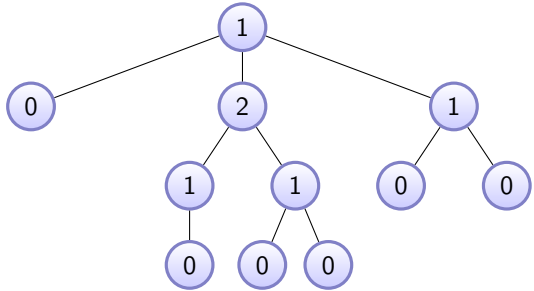
\begin{figure}[!ht]
\begin{center}
\begin{tikzpicture}
[
level 1/.style={sibling distance=8em, level distance=3em},
level 2/.style={sibling distance=4em, level distance=3em},
level 3/.style={sibling distance=2.5em, level distance=3em},
internal/.style={
	circle,
	draw=blue!55!black!50,
	top color=white,
	bottom color=blue!20,
	very thick,
	font=\sffamily
}
]

\node[internal]{1}
	child{ node[internal]{0} }
	child{ node[internal]{2} 
		child{ node[internal]{1} 
			child{ node[internal]{0} }		
		}
		child{ node[internal]{1} 
			child{ node[internal]{0} }
			child{ node[internal]{0} }		
		}
	}
	child{ node[internal]{1} 
		child{ node[internal]{0} }
		child{ node[internal]{0} }	
	};
\end{tikzpicture}
\end{center}
\caption{Tree with vertices holding their protection numbers.}   
\end{figure}

Previous research concerning protection numbers has been conducted in two closely related directions: (i) a number of $k$-protected vertices in a tree of size $n$, and (ii) the protection number of a root or a random vertex.

Cheon and Shapiro~\cite{DBLP:journals/appml/CheonS08} were the first ones to investigate the
number of $2$-protected nodes in trees. They stated the results for unlabelled ordered trees and
Motzkin trees. Later on Mansour~\cite{DBLP:journals/appml/Mansour11} complemented their work by
solving $k$-ary tree case. Over the next several years these results were followed by a series of
papers examining the number of $k$-protected nodes (usually for small values of $k$) in various
models of random trees. To mention just a few, Du and Prodinger~\cite{DBLP:journals/appml/DuP12}
analysed the average number of $2$-protected nodes in random digital search trees,  Mahmoud and
Ward~\cite{DBLP:journals/appml/MahmoudW12} presented a central limit theorem as well as exact
moments of all orders for the number of $2$-protected nodes in binary search trees and three years
later they found the number of $2$-protected nodes in recursive trees
(consult~\cite{DBLP:journals/jap/MahmoudW15}). The family of binary search trees was investigated
also by B\'ona and Pittel~\cite{bona_pittel_2017} who showed that the number of its $k$-protected
nodes decays exponentially in $k$.




In 2015 Holmgren and Janson~\cite{holmgren2015} went for more general results. Using probabilistic
methods, they derived a normal limit law for the number of $k$-protected nodes in a binary search
tree and a random recursive tree.

Soon after, two particular parameters attracted attention of the algorithmic community. These were
(as already mentioned earlier) the protection number of the root and the protection number of a random vertex.
In 2017 Copenhaver~\cite{Copenhaver:2017} found that in a random unlabelled plane tree the expected value of the protection number of the root and the expected value of the protection number of a random vertex approach $1.62297$ and $0.727649$, respectively, as the size of the tree tends to infinity. These results were extended by Heuberger and Prodinger~\cite{heuberger2017protection}. They showed the exact formulas for the first terms of the expectation, the variance and the probability of the respective protection numbers.

The protection number of a root is closely related to parameters called {\it minimal fill-up
level} and {\it saturation level}. These were studied previously by, among others,
Devroye~\cite{DBLP:journals/rsa/Devroye92} and Drmota~\cite{Drmota:2000, Drm09}.


The aim of this paper is to generalize the protection number results to a larger class of rooted
trees. We study both the root protection number as well as a random vertex protection number for
the family of simply generated trees (introduced by Meir and Moon~\cite{MM78}) and their non-plane
counterparts: unlabelled non-plane rooted trees, also called P\'olya trees due to their first
extensive treatment by P\'olya~\cite{Polya1937}, examined further by Otter~\cite{otter1948}
including numerical results and the binary case. The present paper broadens the results
from~\cite{heuberger2017protection}, but maintaining the emphasis on as concrete formulas as
possible. 

For simply generated trees a general theory of asymptotics of certain functional was developed
recently in \cite{DDS18}, but this theory does not cover local functionals as the number of
protected nodes. Devroye and Janson~\cite{devroye2014} presented a unified approach to obtaining
the number of $k$-protected nodes in various classes or random trees by putting them in the
general context of fringe subtrees introduced by Aldous in~\cite{Aldous91}. We have obtained
analogous results for simply generated trees, but employing a different methodology. This allows
an efficient numerical treatment and may serve as a basis for random generation in the framework
of Boltzmann sampling \cite{DFLS04}. Parts of our investigations fall into the general framework
of additive functionals treated in \cite{Wa15}, but our focus on concrete expressions allows an
easy access to numerical evaluation of the considered parameters.

\subsection*{Plan of the paper} In Sections~\ref{SGT},~\ref{PT}, and~\ref{NPBT} we consider simply
generated trees, P\'olya trees and non-plane binary trees, respectively. In each section the
expected value and the variance of the protection number of the root and the protection number
of a random vertex are computed. All these quantities tend to constants when the tree size tends
to infinity. The emphasis is on deriving exact expressions for these constants in terms of
characteristic parameters of the considered tree class. We obtain them in terms of sums that
converge at an exponential rate and therefore enable us to compute efficiently accurate numerical
values. We provide numerical values for the several well-known simply generated tree classes as
well as for the two non-plane classes studied in Sections~\ref{PT} and~\ref{NPBT}.

\section{Simply generated trees}\label{SGT}

\subsection{Protection number of the root}
The class $\mathcal{T}$ of simply generated trees was introduced in \cite{MM78} and can be described as the class of plane rooted trees whose generating function satisfies a function equation of particular type: If $t_n$ denotes the sum of the weights of all trees with $n$ vertices, then the generating function $T(z)=\sum_{n\ge 0} t_nz^n$ satisfies 
\begin{align*}
T(z) = z \phi(T(z)),
\end{align*}
where the power series $\phi(t)=\sum_{j \geq 0} \phi_j t^j$ has only non-negative coefficients, 
$\phi_0>0$, and there is a $j\ge 2$ such that $\phi_j>0$. Moreover, it is required that the
equation $\tau\phi'(\tau)=\phi(\tau)$ has a unique positive solution. 

We are interested in the asymptotic protection number of a random simply generated tree, sampled
according to the weights from all simply generated trees with $n$ vertices, where $n$ tends to
infinity. 

\begin{rem}
For the sake of simplicity we assume that $\phi$ is non-periodic, meaning that there are integers
$i,j,k$ such that $\phi_i,\phi_j,\phi_k$ are all positive and satisfy
$\gcd(\phi_j-\phi_i,\phi_k-\phi_i)=1$. The periodic case can be dealt with in the very same way,
but the calculations leading to the desired number have to be done repeatedly (for analogous
situations) in order collect several contributions to the final value. 
\end{rem}

Within this paper the primary tool that is used will be singularity analysis (see
\cite{flajolet1990singularity,flajolet2009analytic}), which provides a direct connection between
the singularities of a generating function and the asymptotic behaviour of its coefficients.
By Pringsheim's theorem \cite[p.~240]{flajolet2009analytic} we know that a generating function
must have a singularity at $z=R$, if $R$ denotes the radius of convergence. Our assumption that
$\phi$ is non-periodic guarantees furthermore that this is the only singularity on the circle of
convergence. Throughout this paper we will call $z=R$ the \emph{dominant singularity} of the
generating function. In particular, we denote the dominant singularity of $T(z)$ by $\rho$.
Furthermore, we say that a function $f$ has an algebraic singularity of type $\alpha$ at $s$, if
there is a constant $C$ such that $f(z)\sim f(s)+ C\cdot(1-\frac{z}{s})^{\alpha}$ as $z$ tends
to $s$ in such a way that $z-s\notin \mathbb R^+$. In this case $f$ admits a Puiseux expansion
in terms of powers $(1-\frac{z}{s})^{\alpha/k}$ for some positive integer $k$. For instance, it
is well known that the generating function $T(z)$ associated to some class of simply generated
trees has an algebraic singularity $\rho$ of type 1/2 (for obvious reasons also called square root
singularity) the location of which is determined by the system $T(\rho)=\rho \phi(T(\rho))$,
$1=\rho \phi'(T(\rho))$, \emph{cf.} \cite{Drm09}. For further information on this theory we refer
the reader to \cite{flajolet2009analytic} and \cite{flajolet1990singularity}.

%

Let $T_k(z)$ denote the generating function of the class of simply generated trees that have protection number at least $k$, where $z$ marks the total number of nodes. Furthermore, let $\phi(T)$ be non-periodic. Then, $T_k(z)$ can be defined by
\begin{align}
\label{equ:gftk}
T_k(z)=z \( \phi(T_{k-1}(z)) - \phi_0 \).
\end{align}
Note that $T_0(z)=T(z)$.

\begin{lemma}
\label{lem:samesing}
All generating functions $T_k(z)$ have the same dominant singularity as $T(z)$, and it is a square root singularity.
\end{lemma}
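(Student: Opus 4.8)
The plan is to prove the two assertions — (i) every $T_k$ has the same dominant singularity $\rho$ as $T$, and (ii) that singularity is of square-root type — simultaneously by induction on $k$, exploiting the functional equation \eqref{equ:gftk} together with the fact that $T_{k-1}$ is, by induction, analytic at $\rho$ with a square-root singularity there.

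First I would establish the base case $k=0$, which is exactly the classical statement recalled in the text: $T(z)=z\phi(T(z))$ has dominant singularity $\rho$ of type $1/2$, characterised by $T(\rho)=\rho\phi(T(\rho))$ and $1=\rho\phi'(T(\rho))$, and in particular $T(\rho)=\tau$, the unique positive root of $\tau\phi'(\tau)=\phi(\tau)$. Note also that $0<T_k(z)<T(z)$ coefficientwise for $z\in(0,\rho)$, since imposing the constraint ``protection number $\ge k$'' only removes trees; hence each $T_k$ has radius of convergence \emph{at least} $\rho$, so the only thing to rule out is that $T_k$ is actually analytic at $\rho$ (radius strictly larger). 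For the inductive step, assume $T_{k-1}$ has dominant singularity $\rho$ of square-root type, so $T_{k-1}(z)=a-b\sqrt{1-z/\rho}+O(1-z/\rho)$ near $\rho$ with $b>0$ and $a=T_{k-1}(\rho)<\tau$ (the latter because $T_{k-1}<T$ on $(0,\rho)$ and both extend continuously to $\rho$). Then $\phi$, being a power series with radius of convergence $>\tau\ge a$, is analytic at $a$, so $\phi(T_{k-1}(z))=\phi(a)-b\phi'(a)\sqrt{1-z/\rho}+O(1-z/\rho)$, and multiplying by the entire function $z$ shows $T_k(z)=z(\phi(T_{k-1}(z))-\phi_0)$ inherits a square-root singularity at $\rho$, \emph{provided} the coefficient $b\phi'(a)$ of $\sqrt{1-z/\rho}$ does not vanish — and it does not, since $b>0$ and $\phi'(a)>0$ ($\phi$ has non-negative coefficients and is non-constant, $a>0$). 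It remains to check $T_k$ is not analytic on a larger disc: since the singular part is genuinely present at $\rho$, $\rho$ is a singularity of $T_k$, and as the coefficients of $T_k$ are non-negative, Pringsheim's theorem forces $\rho$ to be its radius of convergence and hence its dominant singularity; non-periodicity of $\phi$ (as assumed) guarantees it is the only singularity on $|z|=\rho$.

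The main obstacle is the bookkeeping needed to be sure the square-root term really survives each application of \eqref{equ:gftk} and does not get cancelled or pushed to a higher-order term — equivalently, to verify $T_{k-1}(\rho)<\tau$ at every level so that $\phi$ is comfortably analytic at the relevant point and $\phi'(T_{k-1}(\rho))>0$. This is where one uses that $T_k(z)<T(z)=z\phi(T(z))$ strictly on $(0,\rho)$, together with $T(\rho)=\tau$ and continuity up to the boundary; one may also want to confirm that $T_k$ does extend continuously (indeed analytically in a slit neighbourhood) to $\rho$, which follows from the induction hypothesis and the analyticity of $z\mapsto z(\phi(z)-\phi_0)$ on the relevant domain. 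Once these monotonicity/positivity facts are in hand, the transfer of the Puiseux expansion through $\phi$ is a routine composition-of-analytic-functions computation, and standard singularity analysis \cite{flajolet2009analytic} closes the argument.
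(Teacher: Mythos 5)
Your proposal is correct and follows essentially the same route as the paper: an induction through the recursion $T_k(z)=z(\phi(T_{k-1}(z))-\phi_0)$, composing the Puiseux expansion of $T_{k-1}$ with the analytic function $\phi$ and using positivity of $\phi'$ at the positive value $T_{k-1}(\rho)$ to ensure the square-root term survives (the paper phrases this as iterating the map $\Omega(t)=z\phi(t)-z\phi_0$ applied to $T(z)$). The only cosmetic remarks are that your appeal to Pringsheim is unnecessary (domination by $T$ gives radius $\ge\rho$, and the surviving square-root term gives radius $\le\rho$) and that for $k=1$ one has $T_0(\rho)=\tau$ rather than $<\tau$, which is harmless since $\phi$ is analytic at $\tau$ by the simply generated setup.
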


\begin{proof}
First let us consider that the generating function $T_k(z)$ reads as
\begin{align*}
T_k(z)=
\Omega^k(T(z))
\end{align*}
where $\Omega(t)=z\phi(t)-z\phi_0$ and $\Omega^k(\cdot)$ denotes the $k$-fold composition. 
Since $\Omega(t)$ is analytic at $T(\rho)$, inserting a function
admitting a Puiseux expansion $t(z)=\alpha_0+\alpha_1\sqrt{1-\frac z\rho}+\dots$ results in 
\[
\Omega(t(z))=\Omega(\alpha_0)+\Omega'(\alpha_0) \alpha_1\sqrt{1-\frac z\rho} + \dots, 
\]
again being a Puiseux expansion at $z=\rho$. It is well known that $T(z)$ admits a Puiseux
expansion $\tau_0+\tau_1 \sqrt{1-\frac{z}{\rho}}+\dots $ with nonzero numbers $\tau_0$ and
$\tau_1$. Moreover, we always insert one of the functions $T_k(z)$, thus $\alpha_0$
attains the positive values $T_k(\rho)$, $k=0,1,2,\dots$, implying that $\Omega'(\alpha_0)$ is
always positive, as $\Omega(t)$ is a power series with only non-negative coefficients. By induction it
is guaranteed that $\alpha_1$ is always negative and thus all the function $T_k(z)$ have a unique
dominant singularity of square root type at $z=\rho$.
%
%
\end{proof}

In order to derive the expected value of the protection number $X_n$ of a random simply generated tree of size $n$ ($\textit{i.e.}$ with $n$ nodes) asymptotically, we use the well known formula
\begin{align}
\label{equ:expvalue}
\mathbb{E}X_n=\sum_{k \geq 1} \mathbb{P} ( X_n \geq k).
\end{align}

Thus, we need to calculate the probability $\mathbb{P} (X_n \geq k)$, which is given by
\begin{align*}
\mathbb{P} ( X_n \geq k) = \frac{[z^n]T_k(z)}{[z^n]T(z)}.
\end{align*}

\begin{thm}
Let $X_n$ be the protection number of a random simply generated tree of size $n$. Then the expected value $\mathbb{E}X_n$ and the variance $\mathbb{V}X_n$ 
satisfy
\begin{align*}
\lim_{n \to \infty} \mathbb{E}X_n = \sum_{k \geq 1} \rho^{k-1} \prod_{i=1}^{k-1} \phi'(T_i(\rho)),
\end{align*}
and 
\begin{align*}
\lim_{n \to \infty} \mathbb{V}X_n = \sum_{k \geq 1} (2k-1) \rho^{k-1} \prod_{i=1}^{k-1} \phi'(T_i(\rho))
-\(\lim_{n \to \infty} \mathbb{E}X_n\)^2.
\end{align*}
with $\rho$ denoting the dominant singularity of the generating function $T(z)=z\phi(T(z))$ of the class of simply generated trees.
\end{thm}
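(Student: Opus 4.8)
The plan is to compute $\lim_n \mathbb P(X_n\ge k)$ for each fixed $k$ via singularity analysis, and then justify exchanging the limit with the sum in \eqref{equ:expvalue}. By Lemma~\ref{lem:samesing}, both $T_k(z)$ and $T(z)$ have a square-root singularity at $\rho$, so each admits a Puiseux expansion
\[
T(z)=\tau_0-\tau_1\sqrt{1-\tfrac z\rho}+\cdots,\qquad
T_k(z)=T_k(\rho)-c_k\sqrt{1-\tfrac z\rho}+\cdots,
\]
with $\tau_1,c_k>0$. Standard transfer theorems \cite{flajolet2009analytic} then give $[z^n]T_k(z)\sim \frac{c_k}{2\sqrt\pi}\,\rho^{-n}n^{-3/2}$ and likewise for $T(z)$ with $c_0=\tau_1$, so
\[
\lim_{n\to\infty}\mathbb P(X_n\ge k)=\frac{c_k}{c_0}.
\]
The main work is to identify $c_k/c_0$. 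Differentiating the defining relation \eqref{equ:gftk}, $T_k(z)=z(\phi(T_{k-1}(z))-\phi_0)$, and matching the coefficients of $\sqrt{1-z/\rho}$ on both sides (the analytic parts contribute nothing to this coefficient since $z$ is analytic at $\rho$ and $T_{k-1}(\rho)$ is an interior point of analyticity of $\phi$), I get the recursion $c_k=\rho\,\phi'(T_{k-1}(\rho))\,c_{k-1}$. Iterating from $c_0=\tau_1$ yields $c_k=\tau_1\,\rho^{k}\prod_{i=0}^{k-1}\phi'(T_i(\rho))$, whence, using $\rho\phi'(T_0(\rho))=\rho\phi'(T(\rho))=1$ from the characterisation of the square-root singularity,
\[
\lim_{n\to\infty}\mathbb P(X_n\ge k)=\frac{c_k}{c_0}=\rho^{k}\prod_{i=0}^{k-1}\phi'(T_i(\rho))=\rho^{k-1}\prod_{i=1}^{k-1}\phi'(T_i(\rho)).
\]
Summing over $k\ge1$ via \eqref{equ:expvalue} then gives the claimed expression for $\lim_n\mathbb E X_n$.

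For the variance I would use $\mathbb E X_n^2=\sum_{k\ge1}(2k-1)\mathbb P(X_n\ge k)$ (which follows from $X_n^2=\sum_{k\ge1}(2k-1)\mathbf 1_{\{X_n\ge k\}}$, since $\sum_{k=1}^{m}(2k-1)=m^2$), pass to the limit termwise exactly as above, and subtract the square of the limiting mean; this is why the stated variance formula has the factor $(2k-1)$ in front of the same product.

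The one genuine obstacle is the interchange of $\lim_{n\to\infty}$ with the infinite sum over $k$ in \eqref{equ:expvalue} (and its $(2k-1)$-weighted analogue). Fix $n$: clearly $\mathbb P(X_n\ge k)$ vanishes once $k$ exceeds the tree's height, so each $\mathbb E X_n$ is a finite sum, but to conclude $\lim_n\mathbb E X_n=\sum_k\lim_n\mathbb P(X_n\ge k)$ I need a dominating bound uniform in $n$. I would obtain this by controlling $T_k(\rho)$ and $c_k$ as $k\to\infty$: since $0<T_k(\rho)<T_{k-1}(\rho)\le\tau_0$ and the sequence $T_k(\rho)$ decreases to $0$ (because $T_k(\rho)=\rho(\phi(T_{k-1}(\rho))-\phi_0)\le\rho\,\phi'(\xi_k)T_{k-1}(\rho)$ for some $\xi_k\in(0,T_{k-1}(\rho))$, and $\rho\phi'(t)<\rho\phi'(\tau_0)$ can be pushed below $1$ for small $t$ — here one uses $\rho\phi'(T(\rho))=1$ together with convexity of $\phi'$), the products $\prod_{i=1}^{k-1}\phi'(T_i(\rho))$ decay geometrically, so $\mathbb P(X_n\ge k)\le C q^{k}$ for constants $C>0$, $q\in(0,1)$ that do not depend on $n$. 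A clean way to make the uniformity rigorous is to note $[z^n]T_k(z)\le[z^n]T_{k-1}(z)$ termwise (as $T_k$ has non-negative coefficients dominated coefficientwise by those of $T_{k-1}$, from $\phi(T_{k-1})-\phi_0\le\phi(T_{k-2})-\phi_0$ inductively), so $\mathbb P(X_n\ge k)$ is non-increasing in $k$ for every fixed $n$, and then apply the subexponential/transfer estimates to a fixed reference index to get the geometric tail. With that dominated-convergence step in hand, both limits pass inside the sums and the theorem follows.
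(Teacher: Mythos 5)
Your proposal is correct and takes essentially the same route as the paper: a Puiseux expansion of $T_k(z)$ at $\rho$, matching the coefficient of $\sqrt{1-z/\rho}$ in the recursion $T_k(z)=z(\phi(T_{k-1}(z))-\phi_0)$ to get $c_k=\rho\,\phi'(T_{k-1}(\rho))\,c_{k-1}$, using $\rho\phi'(T(\rho))=1$, transfer theorems for the coefficient asymptotics, and the identity $\mathbb{E}(X_n^2)=\sum_{k\ge 1}(2k-1)\mathbb{P}(X_n\ge k)$ for the variance. The only difference is that you also sketch a dominated-convergence justification for interchanging the limit with the sum over $k$, a point the paper handles only implicitly via the term-wise bound $\rho\phi'(T_i(\rho))\le\rho\phi'(T_1(\rho))<1$ discussed after the theorem.
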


\begin{proof}
We know that the asymptotic behaviour of the generating function, namely $T(z)=\tau_0+\tau_1 \sqrt{1-\frac{z}{\rho}}+\tau_2 (1-\frac{z}{\rho})+\ldots$, implies
\begin{align}
\label{equ:simplygenzntz}
[z^n]T(z) \sim -\tau_1 \frac{n^{-3/2}}{\Gamma(-1/2)}\rho^{-n}, 
\end{align}
as $n$ tends to infinity. In order to derive the asymptotics of the $n$-th coefficient of $T_k(z)$, observe that 
we know from Lemma \ref{lem:samesing} that all generating functions $T_i(z)$ have the same dominant singularity $\rho$ of type $\frac{1}{2}$.
Setting $\eta=\sqrt{1-\frac{z}{\rho}}$, the Puiseux expansions of $T_k(z)$ and $T_{k-1}(z)$ read as
\begin{align*}
T_k(z)=\tau_{0,k}+\tau_{1,k} \eta + \tau_{2,k} \eta^2 + \ldots.
\end{align*}
and
\begin{align*}
T_{k-1}(z)=\tau_{0,k-1}+\tau_{1,k-1} \eta + \tau_{2,k-1} \eta^2 + \ldots.
\end{align*}

Plugging these expansions into (\ref{equ:gftk}) and using $z=\rho(1-\eta^2)$ we get
\begin{align*}
\tau_{0,k}+\tau_{1,k} \eta + \tau_{2,k} \eta^2 + \ldots = \rho(1-\eta^2) \( \sum_{j \geq 0} \phi_j \( \tau_{0,k-1}+\tau_{1,k-1} \eta + \tau_{2,k-1} \eta^2 + \ldots \)^j - \phi_0 \).
\end{align*}

Expanding and comparing coefficients of $\eta^0$ and $\eta^1$ yields
\begin{align*}
[\eta^0]&: \tau_{0,k}=\rho \phi(\tau_{0.k-1})-\rho\phi_0, \\
[\eta^1]&: \tau_{1,k}=\rho \sum_{j\geq0} \phi_j j \tau_{1,k-1} \tau_{0,k-1}^{j-1}.
\end{align*}

Obviously, the $\tau_{0,i}$'s match exactly the $T_i(\rho)$, $i\ge 0$, as they are the constant terms in the Puiseux expansions of the functions $T_i(z)$, with $0 \leq i \leq k$. Thus, the equation for $\tau_{1,k}$ can be rewritten as
$\tau_{1,k}= \rho \tau_{1,k-1} \phi'(T_{k-1}(\rho))$.

As $\tau_{1,0}=\tau_1$, we get
\begin{align*}
\tau_{1,k}= \tau_1 \rho^{k-1} \prod_{i=1}^{k-1} \phi'(T_i(\rho)).
\end{align*}

Applying a transfer lemma \cite{flajolet1990singularity} directly gives the asymptotics of the coefficients of $T_k(z)$ and plugging them in conjunction with 
\eqref{equ:simplygenzntz} into Equation \eqref{equ:expvalue} yields the asymptotic value for the mean.
In order to derive the formula for the asymptotic variance we use the equation 
\begin{align*}
\mathbb{V}X_n = \mathbb{E}(X_n^2) - (\mathbb{E}X_n)^2 \quad\text{ and }\quad \mathbb E(X_n^2) = 
\sum_{k \geq 1} (2k-1) \mathbb{P}(Y_n \geq k)  
\end{align*}
and immediately get the asserted result. 
\end{proof}

It is easy to see that the sequence $(T_i(\rho))_{i \geq 0}$ is monotonically decreasing, since the number of trees with protection number at least $i$ is always greater than the number of trees that have an $(i+1)$-protected root, \textit{i.e.} protection number at least $i+1$.
Since $\phi'$ is monotonically increasing on the positive real axis, this implies that
$
\rho \phi'(T_i(\rho)) \leq \rho \phi'(T_1(\rho)) < \rho \phi'(T(\rho)) = 1.
$
Thus, we can estimate the sum for the expected value by
\begin{align*}
\lim_{n \to \infty} \mathbb{E}X_n = \sum_{k \geq 1} \prod_{i=1}^{k-1} \( \rho \phi'(T_i(\rho)) \) < \sum_{k \geq 1} (\rho \phi'(T_1(\rho)))^{k-1},
\end{align*}
which converges, since $\rho \phi'(T_1(\rho))<1$. 
As the last sum is a convergent geometric series and the inequality even holds term-wise, 
we can calculate efficiently the asymptotic mean and variance
for all classes of simply generated trees with arbitrary accuracy. 
We will now exemplify this by calculating the limits of mean and variance of the protection number
of some prominent classes of simply generated trees.

\begin{ex}[Plane trees]
\label{ex:plane}
The generating function $C(z)$ of plane trees is the unique power series solution of 
\begin{align*}
C(z)=z \frac{1}{1-C(z)},
\end{align*}
which yields
\begin{align}\label{plane_gf}
C(z) = \frac{1}{2} - \sqrt{\frac{1}{4}-z}.
\end{align}
Thus, its dominant singularity is $\rho=\frac{1}{4}$, and $C(\rho)=\frac{1}{2}$.

The recursion for the $T_i(\rho)$'s reads as
\begin{align*}
T_1(\rho)=\frac{1}{4}, \qquad
T_i(\rho)= \frac{1}{4-4T_{i-1}(\rho)} - \frac{1}{4}.
\end{align*}

\noindent
In case of plane trees the recursion can be solved explicitly, leading to 
\begin{align*}
T_i(\rho) = \frac{3}{2 (4^i+2)}.
\end{align*}

\noindent
The limits of expected value and variance are therefore given by
\begin{align*}
\lim_{n\to\infty}\mathbb{E}X_n = \sum_{k \geq 1} \frac{1}{4^{k-1}} \prod_{i=1}^{k-1} \frac{1}{\( 1
- \frac{3}{2(4^i+2)} \)^2} \approx 1.622971384715353,
\end{align*}
and
\begin{align*}
\lim_{n\to\infty}\mathbb{V}X_n = \sum_{k \geq 1} (2k-1) \frac{1}{4^{k-1}} \prod_{i=1}^{k-1}
\frac{1}{\( 1 - \frac{3}{2(4^i+2)} \)^2} 
-\(\lim_{n \to \infty} \mathbb{E}X_n\)^2 
\approx 0.7156950717833327,
\end{align*}
which has already been calculated by Heuberger and Prodinger in \cite{heuberger2017protection}.
\end{ex}

\begin{ex}[Motzkin trees]
\label{ex:motzkin}
The generating function $M(z)$ of Motzkin trees is defined by
\begin{align*}
M(z)= z \(1+M(z)+M(z)^2 \), 
\end{align*}
which can be solved to result in
\begin{align*}
M(z)=\frac{1-z-\sqrt{1-2z-3z^2}}{2z}.
\end{align*}
Thus, its dominant singularity is $\rho=\frac{1}{3}$ and $M(\rho)=1$.

The recursion for the $T_i(\rho)$'s reads as
\begin{align*}
T_1(\rho)=\frac{2}{3},    \qquad
T_i(\rho)= \frac{1}{3}\(T_{i-1}(\rho)^2 + T_{i-1}(\rho)\)
\end{align*}

This recursion can be transformed into another one for the numerators of the rational numbers
$T_i(\rho)$: Indeed, if we write $T_i(\rho)=A_i \cdot 3^{-2^i+1}$, then $A_1=2$ and
$A_i=A_{i-1}^2+3^{2^{i-1}-1}\cdot A_{i-1}$, for $i\ge 2$.  The recurrence for the $A_i$'s does
not fall into the scheme of Aho and Sloane~\cite{AS73} and we are not aware of any method to
solve it explicitly. But as stated before, the sequence $(T_i(\rho))_{i\ge 1}$ is exponentially
decreasing and estimates are easily obtained. Thus we can calculate the limits of mean and 
variance for the protection number numerically with arbitrary accuracy:
\begin{align*} 
\lim_{n \to \infty} \mathbb{E}X_n \approx 2.546378248338912,\qquad 
\lim_{n \to \infty} \mathbb{V}X_n \approx 1.679348871220563.
\end{align*} 
\end{ex}

\begin{ex}[Incomplete binary trees]
\label{ex:increasingbinary}
The generating function $I(z)$ of incomplete binary trees is defined by
\begin{align*}
I(z)= z \(1+2I(z)+I(z)^2 \), 
\end{align*}
which gives
\begin{align*}
I(z)=\frac{1-2z-\sqrt{1-4z}}{2z}.
\end{align*}
The dominant singularity is therefore at $\rho=\frac{1}{4}$ and $I(\rho)=1$.

The recursion for the $T_i(\rho)$'s reads as
\begin{align*}
T_1(\rho)=\frac{3}{4}, \qquad
T_i(\rho)= \frac{1}{4}(T_{i-1}(\rho)^2+ 2 T_{i-1}(\rho)).
\end{align*}

This recursion cannot be solved explicitly, but the numerical values can be easily computed: They 
are 
\begin{align*}
\lim_{n \to \infty} \mathbb{E}X_n \approx 3.536472483525321,\qquad
\lim_{n \to \infty} \mathbb{V}X_n \approx 3.763883442795153.
\end{align*}
\end{ex}

\begin{ex}[Cayley trees]
\label{ex:cayley}
Though, in a strict sense, Cayley trees do not belong to the class of simply generated trees (\emph{cf.} the discussions in \cite{Ja12} and \cite{GJW}), 
they are usually listed as an example for that class. In fact, they are closely related (see \cite{PaSt18} for a thorough analysis and \cite{GJW} for an analysis of the differences) and in many contexts (like the one considered here), quotients of coefficients are computed which makes the fact that in this case the generating functions are exponential ones irrelevant.
 
The (exponential) generating function $C(z)$ of Cayley trees is defined by
\begin{align*}
C(z)= z e^{C(z)}, 
\end{align*}
which has its dominant singularity at $\rho=\frac{1}{e}$. Moreover, we have $C(\rho)=1$.

The recursion for the $T_i(\rho)$'s reads as
\begin{align*}
T_1(\rho)=1-\frac{1}{e}, \qquad
T_i(\rho)= \frac{1}{e}(e^{T_{i-1}(\rho)}-1).
\end{align*}

As in the two previous examples the recursion for the $T_i(\rho)$'s cannot be solved explicitly, but the numerical values are
\begin{align*}
\lim_{n \to \infty} \mathbb{E}X_n \approx 2.286198316708012,\qquad
\lim_{n \to \infty} \mathbb{V}X_n \approx 1.598472890455086.
\end{align*}
\end{ex}

\begin{ex}[Binary trees]
This is the class of complete binary trees with only internal vertices contributing to the size. The generating function
is then defined by the functional equation $B(z)=1+zB(z)^2$ with $B(z)=C(z)/z$ where $C(z)$ is the
function displayed in \eqref{plane_gf}. Though this class does not strictly
fall into the simply generated framework, the functional equation is of the form
$B(z)-1=z\phi(B(z)-1)$, which reflects the fact that incomplete binary trees with all nodes
counted are in bijection to complete binary trees with only internal vertices counted. For the
protection number this causes some shifts within the tree. But the methodology presented above 
works here as well. We get $T_0(z)=B(z)$ and $T_k(z)=zT_{k-1}(z)^2.$ Since $\rho=1/4$ we have
$T_k(\rho)= 2^{2-2^k}$, for all $k\ge 0$, and then finally $\mathbb{P}(X_n\ge k)\to 2^{k+1-2^k}$, as $n$ tends to
infinity. Thus we obtain 
\begin{align*}
\lim_{n \to \infty} \mathbb{E}X_n 
\approx 1.562988296151161, \qquad
\lim_{n \to \infty} \mathbb{V}X_n 
\approx 0.372985688954940.
\end{align*} 
\end{ex}

\subsection{Protection number of a random vertex}
\label{SGT_random_vertex}

In the first part of this section we studied the average protection number of a simply generated tree, that is the protection number of the root of the simply generated tree. 
Now we are interested in the average protection number of a randomly chosen vertex in a simply generated tree of size $n$. We denote this sequence of random variables by $Y_n$.

As in the previous section we calculate the mean via
$
\mathbb{E}Y_n = \sum_{k \geq 1} \mathbb{P}(Y_n \geq k).
$
%
In order to do so we proceed analogously to Heuberger and Prodinger in
\cite{heuberger2017protection} and define $S_k(z)$ to be the generating function of the sequence
$(s_{n,k})_{n \geq 0}$ of $k$-protected vertices summed over all trees of size $n$.
As in \cite{heuberger2017protection} this generating function can be calculated by
\begin{align}\label{HP_formula}
S_k(z)=z^{-1}T_k(z) \frac{\partial}{\partial u} T(z,1),
\end{align} 
by means of the bivariate generating function $T(z,u)$ of simply generated trees, where $z$ marks the size and $u$ the number of leaves, and the generating function $T_k(z)$ of simply generated trees with protection number at least $k$.
The formula for $S_k(z)$ arises from considering a $k$-protected vertex in the following way: First point at a leaf in a simply generated tree (which yields the factor $\frac{\partial}{\partial u} T(z,1)$), then remove this leaf (which explains the $z^{-1}$) and finally attach a tree with protection number at least $k$ (giving the factor $T_k(z)$).

\brem
The procedure works also for complete binary trees, where only internal vertices contribute to
the tree size. The only difference is that for complete binary trees the factor $z^{-1}$ in
\eqref{HP_formula} must be removed, because removing a leaf does not change the size. 
\erem

Using the generating function $S_k(z)$ we can express the probability $\mathbb{P}(Y_n \geq k)$ by
\begin{align}
\label{equ:propyngeqk}
\mathbb{P}(Y_n \geq k) = \frac{[z^n] S_k(z)}{n[z^n]T(z)}.
\end{align}  

\begin{thm}
Let $Y_n$ be the protection number of a randomly chosen vertex in a random simply generated tree of size $n$. Then,
\begin{align*}
\lim_{n \to \infty} \mathbb{E}Y_n = \frac{\phi_0}{T(\rho)} \sum_{k \geq 1} T_k(\rho),
\end{align*}
and
\begin{align*}
\lim_{n \to \infty} \mathbb{V}Y_n = \frac{\phi_0}{T(\rho)} \sum_{k \geq 1}(2k-1) T_k(\rho) 
-\(\lim_{n \to \infty} \mathbb{E}Y_n\)^2.
\end{align*}
\end{thm}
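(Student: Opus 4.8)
The plan is to repeat the argument of the previous theorem, starting now from \eqref{HP_formula} and \eqref{equ:propyngeqk}. The first step is to locate the singularity of $\frac{\partial}{\partial u}T(z,1)$. Marking the leaves of a simply generated tree by $u$ turns the defining equation into $T(z,u)=z\left(\phi(T(z,u))+\phi_0(u-1)\right)$, so differentiating implicitly and setting $u=1$ gives
\begin{align*}
\frac{\partial}{\partial u}T(z,1)=\frac{z\phi_0}{1-z\phi'(T(z))}.
\end{align*}
By \eqref{HP_formula} this yields $S_k(z)=\dfrac{\phi_0\,T_k(z)}{1-z\phi'(T(z))}$, and, writing $U(z):=\sum_{k\ge1}T_k(z)$, also $\sum_{k\ge1}S_k(z)=\dfrac{\phi_0\,U(z)}{1-z\phi'(T(z))}$, which is the function whose $n$-th coefficient, divided by $n[z^n]T(z)$, equals $\mathbb{E}Y_n$.

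Now comes the singularity analysis. Write $T(z)=\tau_0+\tau_1\eta+\tau_2\eta^2+\cdots$ with $\eta=\sqrt{1-z/\rho}$, $\tau_0=T(\rho)$ and $\tau_1<0$. Plugging $z=\rho(1-\eta^2)$ into $1-z\phi'(T(z))$ and expanding shows $1-z\phi'(T(z))=c\,\eta+O(\eta^2)$ with $c=-\rho\tau_1\phi''(\tau_0)>0$, while comparing the coefficients of $\eta^2$ in $T=z\phi(T)$ gives the branch-point identity $\rho\tau_1^2\phi''(\tau_0)=2\,T(\rho)$, i.e.\ $c\,\tau_1=-2\,T(\rho)$; this is precisely what makes the various constants cancel at the end. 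Since each $T_k(z)$ is analytic at $\rho$ with constant term $T_k(\rho)$ (Lemma~\ref{lem:samesing}), $S_k(z)$ has an algebraic singularity of type $-1/2$ at $\rho$, namely $S_k(z)\sim \frac{\phi_0T_k(\rho)}{c}\left(1-z/\rho\right)^{-1/2}$. The transfer lemma then gives $[z^n]S_k(z)\sim \frac{\phi_0T_k(\rho)}{c\,\Gamma(1/2)}\,n^{-1/2}\rho^{-n}$; forming the quotient with $n[z^n]T(z)$, whose asymptotics are recalled in \eqref{equ:simplygenzntz}, and simplifying by means of $\Gamma(-1/2)/\Gamma(1/2)=-2$ together with $c\tau_1=-2T(\rho)$, one obtains $\mathbb{P}(Y_n\ge k)\to \phi_0 T_k(\rho)/T(\rho)$. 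Applying the identical computation to $\sum_{k\ge1}S_k(z)$ gives $\lim_{n\to\infty}\mathbb{E}Y_n=\frac{\phi_0}{T(\rho)}\,U(\rho)=\frac{\phi_0}{T(\rho)}\sum_{k\ge1}T_k(\rho)$, and the variance follows in the same way from $\mathbb{E}(Y_n^2)=\sum_{k\ge1}(2k-1)\mathbb{P}(Y_n\ge k)$ and $\mathbb{V}Y_n=\mathbb{E}(Y_n^2)-(\mathbb{E}Y_n)^2$, now with $\sum_{k\ge1}(2k-1)T_k(z)$ playing the role of $U(z)$.

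I expect the step requiring real care — and the main obstacle — to be the justification that $U(z)$, and likewise $\sum_{k\ge1}(2k-1)T_k(z)$, is analytic in a common $\Delta$-domain at $\rho$ with the asserted value at $\rho$, so that the transfer lemma may be applied to the summed generating function; equivalently, that $\lim_n$ and $\sum_k$ may be interchanged. On the real segment $[0,\rho]$ this is already implicit in the root-protection section: from \eqref{equ:gftk} and $\phi(t)-\phi_0=\sum_{j\ge1}\phi_jt^j\le t\phi'(t)$ one gets $T_k(z)\le z\phi'(T_{k-1}(z))\,T_{k-1}(z)\le q\,T_{k-1}(z)$ for $z\in[0,\rho]$ and $k\ge2$, where $q:=\max_{z\in[0,\rho]}z\phi'(T_1(z))<1$ (a continuous function on a compact set, strictly below $z\phi'(T(z))\le1$), hence $T_k(z)\le q^{k-1}T(\rho)$ on $[0,\rho]$ and the two series converge uniformly there. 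To upgrade this to a genuine $\Delta$-domain bound one combines it with the analytic continuation of Lemma~\ref{lem:samesing}: for $|z|\le\rho$ one has $|T_k(z)|\le T_k(|z|)\le q^{k-1}T(\rho)$, and on the bounded part of the $\Delta$-domain with $|z|>\rho$ a routine compactness argument through the recursion \eqref{equ:gftk} yields a uniform geometric bound $|T_k(z)|\le Cq^{k-1}$, from which the required analyticity and boundary values of the two summed functions follow and everything above goes through. Equivalently, the same estimate produces a uniform bound $\mathbb{P}(Y_n\ge k)\le Cq^{k-1}$ for all $n$: split $S_k(z)=T_k(\rho)\,\frac{\phi_0}{1-z\phi'(T(z))}+\phi_0\,\frac{T_k(z)-T_k(\rho)}{1-z\phi'(T(z))}$, note that the first term is $T_k(\rho)$ times a fixed function with $n$-th coefficient $O(n^{-1/2}\rho^{-n})$ and that the second is analytic at $\rho$ with a Puiseux expansion whose coefficients are again $O(q^{k-1})$, and then invoke dominated convergence for series to conclude both the mean and the variance.
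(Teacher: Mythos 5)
Your proposal is correct and follows essentially the same route as the paper: both start from \eqref{HP_formula} and \eqref{equ:propyngeqk}, use $\frac{\partial}{\partial u}T(z,1)=\frac{z\phi_0}{1-z\phi'(T(z))}$, and extract $\mathbb{P}(Y_n\ge k)\to \phi_0 T_k(\rho)/T(\rho)$ by singularity analysis and transfer, the only difference being that the paper rewrites the factor as $z^2\phi_0 T'(z)/T(z)$ via the functional equation while you expand $1-z\phi'(T(z))=c\eta+\mathcal{O}(\eta^2)$ directly and invoke the branch-point identity $\rho\tau_1^2\phi''(\tau_0)=2T(\rho)$ --- an equivalent computation. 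Your extra paragraph justifying the interchange of $\sum_k$ and $\lim_n$ (which the paper passes over silently) is sound in outline and is added rigor rather than a deviation; only note that Lemma~\ref{lem:samesing} gives $T_k$ a square-root singularity at $\rho$ rather than analyticity there, which does not affect your leading-order expansion of $S_k(z)$.
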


\begin{proof}
First we need to determine the $n$-th coefficient of $S_k(z)$. We have
\begin{align}
\label{equ:partielleableitungtz1}
\frac{\partial}{\partial u} T(z,1) = \frac{z \phi_0}{1 - z  \phi'(T(z))}.
\end{align}

Using $T'(z) = z \phi'(T(z)) T'(z) + \phi(T(z))$ and $\phi(T(z))=\frac{T(z)}{z}$ we get
\begin{align*}
z \phi'(T(z)) = \frac{T'(z) - \frac{T(z)}{z}}{T'(z)}.
\end{align*}

\noindent
Therefore (\ref{equ:partielleableitungtz1}) transforms to
\begin{align*}
\frac{\partial}{\partial u} T(z,1) = \frac{T'(z) z^2 \phi_0}{T(z)}.
\end{align*}

\noindent
Thus, altogether we have
\begin{align*}
[z^n]S_k(z) = [z^n] z^{-1} T_k(z) \frac{T'(z) z^2 \phi_0}{T(z)},
\end{align*}
which gives
\begin{align*}
[z^n]S_k(z) \sim \frac{- \tau_{0,k} \tau_1 \phi_0}{2 \tau_0} \frac{n^{-1/2}}{\Gamma(1/2)} \rho^{-n}.
\end{align*}

\noindent
Finally, we get
\begin{align*}
\mathbb{E}Y_n &= \sum_{k \geq 1} \mathbb{P}(Y_n \geq k) = \sum_{k \geq 1} \frac{[z^n] S_k(z)}{n[z^n]T(z)} 
\overset{n \to \infty}{\to} \sum_{k \geq 1} \frac{T_k(\rho) \phi_0}{T(\rho)}.
\end{align*}

\noindent
For the variance we use again the formula 
$\mathbb{V}Y_n = 
\sum_{k \geq 1} (2k-1) \mathbb{P}(Y_n \geq k)
-\mathbb E(Y_n)^2$
and (\ref{equ:propyngeqk}).
\end{proof}

\begin{table}[h!]
\begin{tabular}{l|c|c}
&  $\lim_{n \to \infty} \mathbb{E}Y_n$ & $\lim_{n \to \infty} \mathbb{V}Y_n $ \\ \hline
Plane trees & 0.7276492769137261 & 0.8168993794836289 \\
Motzkin trees & 1.307604625963334 & 1.730614214799486 \\
Incomplete binary trees & 1.991819588602741 & 3.638259051495130 \\
Cayley trees & 1.186522661652180 & 1.632206223956926 \\
Complete binary trees & 1.265686036087572 & 0.226591112528581
\end{tabular}
\vspace{3mm}
\caption{The approximate values for the limits of mean and variance of the protection number of a random vertex in different classes of simply generated trees.}
\end{table}

\section{P\'olya trees}\label{PT}

\subsection{Protection number of the root}
Let $T(z)$ be the generating function of P\'olya trees, which reads as
\begin{align*}
T(z) = z e^{T(z)} \exp \left( \sum_{i \geq 2} \frac{T(z^i)}{i} \right),
\end{align*}
and in correspondence to the previous section let us denote by $T_k(z)$ the generating function of the class of P\'olya trees that have protection number at least $k$. This generating function can be specified by
\begin{align}
\label{equ:polyatk}
T_k(z)=z e^{T_{k-1}(z)} \exp \left( \sum_{i \geq 2} \frac{T_{k-1}(z^i)}{i} \right) - z,
\end{align}
with $T_0(z)=T(z)$. From the classical results of P\'olya~\cite{Polya1937} we know that $T(z)$ has
a unique dominant singularity $\rho$ of type 1/2 and admits Puiseux series expansion there, which 
starts as 
\begin{equation} \label{Polya_Puiseux}
T(z)\sim 1-b\sqrt{1-\frac z\rho}+\frac{b^2}3\(1-\frac z\rho\)+d\(1-\frac z\rho\)^{3/2}+\cdots.
\end{equation} 
Numerical approximations for the constants have been
first computed by Otter~\cite{otter1948}. This was also topic in Finch~\cite[Section~5.6]{Finch03}
and \cite[p.~477]{flajolet2009analytic} where we find approximations up to 25 digits: $\rho\approx
0.3383218568992076951961126$ and $b\approx 1.55949002037464088554226$.

\begin{lemma}
All the generating functions $T_k(z)$ have their (unique) dominant singularity at $\rho$, and the
singularity is a square root singularity.
\end{lemma}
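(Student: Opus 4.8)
The plan is to transcribe the proof of Lemma~\ref{lem:samesing}, the only new feature being the factor
\[
D_{k-1}(z):=\exp\Bigl(\sum_{i\ge2}\tfrac{T_{k-1}(z^i)}{i}\Bigr),
\]
which I shall treat as a harmless analytic perturbation. The crucial point is that the dominant singularity of $T(z)$ satisfies $\rho<1$ (numerically $\rho\approx0.338$), so for every $i\ge2$ the substitution $z\mapsto z^i$ maps the disk $|z|<\rho^{1/2}$ into the open disk of radius $\rho$ on which $T_{k-1}$ is, by the induction hypothesis, analytic. Since moreover $T_{k-1}(0)=0$ (immediate from \eqref{equ:polyatk} and $T(0)=0$), one has $T_{k-1}(x)=\mathcal O(x)$ near $0$, so the tail of $\sum_{i\ge2}T_{k-1}(z^i)/i$ is dominated by a multiple of $\sum_i|z|^i/i$ and the series converges uniformly on compact subsets of $|z|<\rho^{1/2}$. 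Hence $D_{k-1}$ is analytic there, and as $\rho<\rho^{1/2}$ it is analytic in a full neighbourhood of $z=\rho$; being a power series in $z$, its local expansion at $\rho$ involves only integer powers of $(1-z/\rho)$, i.e. only even powers of $\eta:=\sqrt{1-z/\rho}$, and $D_{k-1}(\rho)=\exp(\sum_{i\ge2}T_{k-1}(\rho^i)/i)>0$.

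Now argue by induction on $k$. For $k=0$ the statement is P\'olya's classical result, with the Puiseux expansion \eqref{Polya_Puiseux}, so in particular $\tau_{1,0}=-b<0$. For the inductive step write \eqref{equ:polyatk} as $T_k(z)=z\,e^{T_{k-1}(z)}D_{k-1}(z)-z$ and substitute $z=\rho(1-\eta^2)$. By the induction hypothesis $T_{k-1}(z)=\tau_{0,k-1}+\tau_{1,k-1}\eta+\tau_{2,k-1}\eta^2+\cdots$ is analytic in $\eta$ near $\eta=0$ with $\tau_{1,k-1}<0$; since $\exp$ is entire, $e^{T_{k-1}(z)}=e^{\tau_{0,k-1}}(1+\tau_{1,k-1}\eta+\cdots)$ is again analytic in $\eta$; and $\rho(1-\eta^2)$ and $D_{k-1}(z)$ are analytic in $\eta$ as well. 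Products of functions analytic in $\eta$ are analytic in $\eta$, so $T_k$, viewed as a function of $\eta$, is analytic at $\eta=0$; that is, $T_k$ admits a Puiseux expansion $\tau_{0,k}+\tau_{1,k}\eta+\cdots$ at $z=\rho$ and is analytic in a slit neighbourhood of $\rho$. Comparing coefficients of $\eta^1$ (and noting that $-z=-\rho+\rho\eta^2$ contributes nothing at order $\eta$) gives
\[
\tau_{1,k}=\rho\,e^{\tau_{0,k-1}}\,D_{k-1}(\rho)\,\tau_{1,k-1},
\]
which is strictly negative because the first three factors are positive and $\tau_{1,k-1}<0$. Thus $\rho$ is a genuine square-root singularity of $T_k$, not a removable one, and the induction hypothesis on the sign of $\tau_{1,k}$ is maintained.

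It remains to check that $\rho$ is the \emph{unique} dominant singularity. On the circle $|z|=\rho$ the function $z\,e^{T_{k-1}(z)}D_{k-1}(z)-z$ can only fail to be analytic at points where $T_{k-1}$ does, since $D_{k-1}$ is analytic in $|z|<\rho^{1/2}$ and $\exp$ is entire; by the induction hypothesis the only such point is $z=\rho$. Finally $T_k$ has nonnegative coefficients and, by the previous paragraph, a genuine singularity at $\rho$, so Pringsheim's theorem identifies $\rho$ as its radius of convergence; together with the preceding sentence this shows $z=\rho$ is the only singularity on the circle of convergence, completing the induction.

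I expect the only genuinely new work, relative to Lemma~\ref{lem:samesing}, to be controlling the infinite sum $\sum_{i\ge2}T_{k-1}(z^i)/i$: one must combine $\rho<1$ with $T_{k-1}(0)=0$ to secure analyticity of $D_{k-1}$ near $\rho$, and then observe that $D_{k-1}$ contributes only even powers of $\eta$, so it can neither create nor cancel the square-root term produced by $e^{T_{k-1}(z)}$. Everything else is a direct copy of the simply generated argument.
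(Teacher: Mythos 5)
Your proposal is correct and follows essentially the same route as the paper: induction on $k$, analyticity of the factor $\exp\bigl(\sum_{i\ge2}T_{k-1}(z^i)/i\bigr)$ in a disk strictly larger than $|z|<\rho$ (using $\rho<1$), and the fact that composing with $\exp$ preserves the location and square-root type of the singularity. You merely spell out what the paper compresses into one sentence, by tracking the coefficient of $\sqrt{1-z/\rho}$ and its sign through the induction (as in the paper's Lemma~\ref{lem:samesing}) and by checking uniqueness on the circle of convergence explicitly.
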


\begin{proof}
First let us recall that $T_0(z)=T(z)$. Thus, for $k=0$ the lemma is trivial. For $k \geq 1$ we
proceed by induction. Therefore let us assume that $T_{k-1}(z)$ has the dominant singularity
$\rho$ which is of type $\frac{1}{2}$. Then the dominant singularity of $T_k(z)$, satifying the
recurrence relation \eqref{equ:polyatk}, 
comes from $e^{T_{k-1}(z)}$, since $\exp \left( \sum_{i \geq 2} \frac{T_{k-1}(z^i)}{i} \right)$ is analytic in $|z| < \rho + \epsilon$ with $\epsilon > 0$ sufficiently small. 
Applying the exponential function to a function having an algebraic singularity does neither change the location nor the type of the singularity, which proves the assertion after all.
\end{proof}

The goal of this section is to derive an asymptotic value for the average protection number of P\'olya trees. We use again the formula 
$\mathbb{E}X_n = \sum_{k \geq 1} \mathbb{P} (X_n \geq k)$, 
%
but rewrite this equation as
\begin{align*}
\mathbb{E}X_n = \sum_{k \geq 1} \prod_{i=1}^k \mathbb{P} (X_n \geq k | X_n \geq k-1),
\end{align*}
where the conditional probabilities can be obtained by
\begin{align}
\label{equ:condprobpolya}
\mathbb{P} (X_n \geq k | X_n \geq k-1) =  \frac{[z^n]T_{k}(z)}{[z^n]T_{k-1}(z)}.
\end{align}

\begin{lemma}
\label{lem:expansionstktk+1}
The asymptotic expansions of the $n$-th coefficients of $T_{k}(z)$ and $T_{k-1}(z)$ read as
\begin{align*}
[z^n]T_{k-1}(z) &= \frac{\gamma_k \rho^{-n} n^{-\frac{3}{2}}}{\Gamma(-1/2)} \left( 1 + \mathcal{O} \left( \frac{1}{n} \right) \right), \\
[z^n]T_{k}(z) &= \frac{(T_{k}(\rho)+\rho)\gamma_k \rho^{-n} n^{-\frac{3}{2}}}{\Gamma(-1/2)} \left( 1 + \mathcal{O} \left( \frac{1}{n} \right) \right),
\end{align*}
as $n \to \infty$, with a constant $\gamma_k>0$.
\end{lemma}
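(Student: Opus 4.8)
The plan is to pin down the first two terms of the Puiseux expansion of $T_k(z)$ at $\rho$ and to relate them to those of $T_{k-1}(z)$ via the recurrence \eqref{equ:polyatk}. By the preceding lemma each $T_j(z)$ has a square-root singularity at $\rho$; writing $\eta=\sqrt{1-z/\rho}$, it therefore admits an expansion
\[
T_j(z)=\tau_{0,j}+\tau_{1,j}\,\eta+\tau_{2,j}\,\eta^2+\cdots,
\]
valid in a $\Delta$-domain anchored at $\rho$, with $\tau_{0,j}=T_j(\rho)$. The first point to establish is that $\tau_{1,j}\neq 0$ for every $j$: the base case is $\tau_{1,0}=-b\neq 0$ from \eqref{Polya_Puiseux}, and the inductive step will drop out of the recursion for the $\tau_{1,j}$'s derived below.

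The key step is to insert the expansions of $T_{k-1}$ into \eqref{equ:polyatk}. As already exploited in the proof of the preceding lemma, the factor $\exp\bigl(\sum_{i\ge 2}T_{k-1}(z^i)/i\bigr)$ is analytic on the disk $|z|<\sqrt{\rho}$, which contains $\rho$ in its interior (here one also uses that $T_{k-1}(w)=O(w)$ near $w=0$, so the series converges locally uniformly there); near $\rho$ it is thus a convergent power series in $z-\rho$, equivalently in $\eta^2=1-z/\rho$, hence it contributes only even powers of $\eta$, with $\eta^0$-coefficient equal to its value at $\rho$ and vanishing $\eta^1$-coefficient. Writing $e^{T_{k-1}(z)}=e^{T_{k-1}(\rho)}\bigl(1+\tau_{1,k-1}\eta+O(\eta^2)\bigr)$, multiplying by this analytic factor and by $z=\rho(1-\eta^2)$, and comparing coefficients of $\eta^0$ and $\eta^1$, one gets $\tau_{0,k}=T_k(\rho)$ (consistent with \eqref{equ:polyatk} at $z=\rho$) and
\[
\tau_{1,k}=\rho\,e^{T_{k-1}(\rho)}\exp\Bigl(\sum_{i\ge 2}\tfrac{T_{k-1}(\rho^i)}{i}\Bigr)\,\tau_{1,k-1}.
\]
Reading \eqref{equ:polyatk} at $z=\rho$ shows that the prefactor of $\tau_{1,k-1}$ is exactly $T_k(\rho)+\rho$, so $\tau_{1,k}=(T_k(\rho)+\rho)\,\tau_{1,k-1}$. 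Since $T_k(\rho)\ge 0$ and $\rho>0$, this factor is positive, which both completes the induction that $\tau_{1,k}\neq 0$ and shows that $\tau_{1,k}$ keeps the sign of $\tau_{1,0}=-b$; iterating yields $|\tau_{1,k}|=b\prod_{i=1}^{k}\bigl(T_i(\rho)+\rho\bigr)$.

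Finally I would invoke the transfer theorem of singularity analysis. The analyticity in a common $\Delta$-domain that the theorem requires propagates along the same induction: away from $\rho$, $T_k(z)=z\bigl(e^{T_{k-1}(z)}\exp(\sum_{i\ge 2}T_{k-1}(z^i)/i)-1\bigr)$ is analytic wherever $T_{k-1}$ is and the composite factor is analytic on $|z|<\sqrt{\rho}$, so $T_k$ is analytic on the intersection, which again contains a $\Delta$-domain at $\rho$. Transferring the expansions gives $[z^n]T_{k-1}(z)\sim\tau_{1,k-1}\,\frac{n^{-3/2}}{\Gamma(-1/2)}\rho^{-n}$ and $[z^n]T_k(z)\sim\tau_{1,k}\,\frac{n^{-3/2}}{\Gamma(-1/2)}\rho^{-n}$, and substituting $\tau_{1,k}=(T_k(\rho)+\rho)\tau_{1,k-1}$ produces the asserted formulas with $\gamma_k:=|\tau_{1,k-1}|=b\prod_{i=1}^{k-1}(T_i(\rho)+\rho)>0$. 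The error term $O(1/n)$ is the standard one: the $\eta^2$-term contributes nothing to $[z^n]$, so the first correction comes from the $\eta^3=(1-z/\rho)^{3/2}$-term, which is smaller by a factor $O(1/n)$.

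The computations are routine once the setup is in place; the only point that needs a little care is checking, all along the induction, that the leading singular coefficient $\tau_{1,k}$ never degenerates to zero and that every $T_k$ extends analytically to a common $\Delta$-domain. Both amount to running the argument of the preceding lemma one order further, now also tracking the coefficient of $\eta$, and neither poses a real obstacle beyond the bookkeeping.
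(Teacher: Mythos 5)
Your proposal is correct and follows essentially the same route as the paper: insert the square-root expansion of $T_{k-1}$ into the recurrence \eqref{equ:polyatk}, use that the factor $\exp\bigl(\sum_{i\ge 2}T_{k-1}(z^i)/i\bigr)$ is analytic at $\rho$, identify the prefactor $\rho e^{T_{k-1}(\rho)}Q_{k-1}(\rho)$ as $T_k(\rho)+\rho$ by evaluating the recurrence at $z=\rho$, and transfer. Your extra bookkeeping (non-vanishing and sign of the singular coefficient, the explicit product formula for $\gamma_k$, and the $\Delta$-domain analyticity) only makes explicit what the paper leaves implicit.
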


\begin{proof}
Let the Puiseux expansion of $T_{k-1}(z)$ be given by $T_{k-1}(z) = T_{k-1}(\rho) - \gamma_k \sqrt{1- \frac{z}{\rho}} + \ldots$.

Then $T_{k}(z)$ behaves asymptotically as
$
T_{k}(z) \sim \rho e^{T_{k-1}(\rho)} Q_{k-1}(\rho) e^{-\gamma_k \sqrt{1-\frac{z}{\rho}}},
$
where $Q_{k-1}(\rho) = \exp \left( \sum_{i \geq 2} \frac{T_{k-1}(\rho^i)}{i} \right)$.
Applying the asymptotic relation $e^{-\gamma_k \sqrt{1-\frac{z}{\rho}}} \sim 1 -\gamma_k \sqrt{1-\frac{z}{\rho}}$ and using the equation $\rho e^{T_{k-1}(\rho)} Q_{k-1}(\rho) = T_{k}(\rho) + \rho$ completes the proof.
\end{proof}

Plugging the expansions obtained in Lemma \ref{lem:expansionstktk+1} into Equation (\ref{equ:condprobpolya}) gives
\begin{align*}
\mathbb{P} (X_n \geq k | X_n \geq k-1) = T_{k}(\rho)+\rho,
\end{align*}
which directly yields the following theorem.

\begin{thm}
\label{thm:polyaexxn}
Let $X_n$ be the protection number of a random P\'olya tree of size $n$. Then
\begin{align}
\label{equ:limexnpolya}
\lim_{n \to \infty} \mathbb{E}X_n = \sum_{k \geq 1} \prod_{i=1}^k ( T_{k}(\rho) + \rho) \approx 2.154889671973873,
\end{align}
and
$\lim_{n \to\infty} \mathbb{V}X_n \approx 
1.369993017502652.$
\end{thm}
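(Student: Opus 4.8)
The plan is to upgrade the pointwise statement already in hand---namely, from Lemma~\ref{lem:expansionstktk+1} and \eqref{equ:condprobpolya}, that $\mathbb{P}(X_n\ge k\mid X_n\ge k-1)=[z^n]T_k(z)/[z^n]T_{k-1}(z)\to T_k(\rho)+\rho$ as $n\to\infty$ for every fixed $k$---to a statement about the whole series. Telescoping the conditional probabilities gives the exact identity $\mathbb{P}(X_n\ge k)=\prod_{i=1}^k\bigl([z^n]T_i(z)/[z^n]T_{i-1}(z)\bigr)$, so that, for each fixed $k$, $\mathbb{P}(X_n\ge k)\to\prod_{i=1}^k(T_i(\rho)+\rho)$. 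Feeding this into $\mathbb{E}X_n=\sum_{k\ge1}\mathbb{P}(X_n\ge k)$ produces the formula for $\lim_{n\to\infty}\mathbb{E}X_n$, and feeding it into $\mathbb{E}(X_n^2)=\sum_{k\ge1}(2k-1)\mathbb{P}(X_n\ge k)$ together with $\mathbb{V}X_n=\mathbb{E}(X_n^2)-(\mathbb{E}X_n)^2$ gives $\lim_{n\to\infty}\mathbb{V}X_n=\sum_{k\ge1}(2k-1)\prod_{i=1}^k(T_i(\rho)+\rho)-\bigl(\lim_{n\to\infty}\mathbb{E}X_n\bigr)^2$. What remains is to justify interchanging $\lim_{n\to\infty}$ with these infinite sums.

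The main obstacle is exactly this exchange, i.e.\ producing a majorant $\mathbb{P}(X_n\ge k)\le c_k$, uniform in $n$, with $\sum_k(2k-1)c_k<\infty$. Two structural observations make the candidate limit itself summable. First, taking $k=1$ in \eqref{equ:polyatk} and using the functional equation of $T$ gives $T_1(z)=T(z)-z$, hence $T_1(\rho)+\rho=T(\rho)=1$ (consistent with $\mathbb{P}(X_n\ge1)\to1$). Second, the power series $T_k(z)$ are coefficient-wise non-increasing in $k$ (a tree counted by $T_k$ is also counted by $T_{k-1}$), so $T_k(\rho)$ decreases; moreover $[z^n]T_k(z)=0$ for $n\le k$, whence $T_k(\rho)\le T(\rho)-\sum_{n\le k}[z^n]T(z)\,\rho^n\to0$. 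Therefore $T_k(\rho)+\rho\downarrow\rho<1$, the products $\prod_{i=1}^k(T_i(\rho)+\rho)$ decay geometrically in $k$, and both target series converge. To upgrade this to a bound uniform in $n$ I see two routes: a combinatorial one, showing $\mathbb{P}(X_n\ge k)\le Aq^k$ for some $A>0$, $q\in(0,1)$ by exploiting that a positive proportion of the vertices of a random P\'olya tree are leaves, so it is exponentially unlikely in $k$ that none of them sits at depth $<k$; or an analytic one, making the transfer step in Lemma~\ref{lem:expansionstktk+1} quantitative and uniform in $k$, using $T_k(z)\to0$ and uniform control of the Puiseux coefficients $\gamma_k$ near $\rho$. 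Either way, dominated convergence then closes both limits; this uniformity is the delicate point.

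Finally, to obtain the numerical values I would iterate \eqref{equ:polyatk} read at the points $z=\rho^{j}$ simultaneously for $1\le j\le J$: starting from $T_0(\rho^{j})=T(\rho^{j})$, computed by fixed-point iteration of the P\'olya equation on this grid, update $T_k(\rho^{j})=\rho^{j}e^{T_{k-1}(\rho^{j})}\exp\bigl(\sum_{i\ge2}T_{k-1}(\rho^{ij})/i\bigr)-\rho^{j}$. Because $\rho\approx0.338$, the auxiliary quantities $T_{k-1}(\rho^{m})$ are negligible once $m$ is moderately large, so truncating the grid at a $J$ of order a few dozen, truncating the inner sum over $i$, and truncating the outer (geometrically convergent) sum over $k$ all introduce errors far below the displayed accuracy. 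Substituting the resulting $T_k(\rho)$ into $\sum_{k\ge1}\prod_{i=1}^k(T_i(\rho)+\rho)$ and $\sum_{k\ge1}(2k-1)\prod_{i=1}^k(T_i(\rho)+\rho)-\bigl(\lim_{n\to\infty}\mathbb{E}X_n\bigr)^2$ yields $\lim_{n\to\infty}\mathbb{E}X_n\approx2.154889671973873$ and $\lim_{n\to\infty}\mathbb{V}X_n\approx1.369993017502652$.
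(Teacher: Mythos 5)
Your proposal follows essentially the same route as the paper: the telescoping of $\mathbb{P}(X_n\ge k)$ into conditional probabilities $[z^n]T_i(z)/[z^n]T_{i-1}(z)$, the coefficient asymptotics of Lemma~\ref{lem:expansionstktk+1} giving the limits $T_i(\rho)+\rho$, and the tail-sum formulas for the mean and second moment (your $\prod_{i=1}^k(T_i(\rho)+\rho)$ is indeed the intended reading of the theorem's displayed product). Your extra observations --- $T_1(\rho)+\rho=T(\rho)=1$, $T_k(\rho)\to0$, geometric decay of the products, and the need to justify exchanging $\lim_{n\to\infty}$ with the sums --- only make explicit points the paper leaves implicit, so the approach is the same.
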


\begin{proof}
The proof for the asymptotic mean follows directly by Lemma \ref{lem:expansionstktk+1}.
In order to determine the variance we use the representation $\lim_{n\to\infty}\mathbb{V}X_n =
\sum_{k \geq 1} (2k-1) \prod_{i=1}^k (T_k(\rho)+\rho)-\mathbb E(X_n)^2$.
\end{proof}

\begin{rem} 
Note that in order to get accurate numerical values, we must not compute $T_k(\rho)$ by insertion
into a (truncated) series expansion for $T_k(z)$ The reason is that $\rho$ lies on the circle of
convergence and thus the convergence is very slow at $z=\rho$. Instead, $T_k(\rho)$ can be
directly computed using the recurrence relation \eqref{equ:polyatk}. The values $T_k(\rho^i)$
for $i\ge 2$, which appear in that recurrence relation, can be computed with the help of the
series expansion of $T_k(z)$, because $\rho^i$ then lies in the interior of region of convergence
where the series converges at an exponential rate.  
\end{rem}

\begin{rem}
We could also have used the same approach as for simply generated trees in order to get the asymptotic mean. Then the resulting formula looks like
\begin{align}
\label{equ:limexnpolyaaltemethode}
\lim_{n \to \infty} \mathbb{E}X_n = \sum_{k \geq 1} \rho^{k-1} \prod_{i=1}^{k-1} C_i e^{T_i(\rho)},
\end{align} 
where $C_j = \exp \left( \sum_{i \geq 2} \frac{T_j(\rho^i)}{i} \right)$.
One can show that $C_i$ tends to 1 and and $T_i(\rho)$ tends to 0 exponentially fast and get the constant given in Theorem \ref{thm:polyaexxn}. However, since this approach requires more technical calculations, we decided to switch to the more direct strategy using the conditional probabilities. Moreover note that the equivalence of (\ref{equ:limexnpolya}) and (\ref{equ:limexnpolyaaltemethode}) is immediate from (\ref{equ:polyatk}).
\end{rem}

\subsection{Protection number of a random vertex}

The method of marking a leaf and replacing it by a tree with protection number $k$ does not work
here. Due to possible symmetries in non-plane trees, this would result in wrong counting: Indeed,
if there are $k$-protected vertices $x_1,\dots,x_\ell$ which can be mapped to each other by some
automorphisms of the tree (\emph{i.e.}, they lie in the same vertex class), then only one of them
is counted. Though this is counterbalanced by trees having $\ell$ leaves in the same vertex class one of
which is replaced by a tree with protection number $k$ (the root of this tree is then counted
$\ell$ times), there are further overcounts: As all leaves are marked, trees having several leaves
in the same vertex class are counted several times, and so are their $k$-protected vertices. 

Thus we appeal to the proof of \cite[Theorem~3.1]{Wa15} here: For a tree $T$ let 
\[ 
f(T)=
\begin{cases}
1 & \text{ if $T$ has protection number at least $k$}, \\ 
0 & \text{ otherwise.}
\end{cases}
\]
Moreover, we define $F(T)$ to be the number of $k$-protected nodes in $T$. Then the generating
function $R_k(z,u)=\sum_T z^{|T|} u^{F(T)}$ satisfies (\emph{cf.} \cite[Equ.~(3.1)]{Wa15})
\begin{equation} \label{Wagner_formula}
z\exp\(\sum_{i\ge 1}\frac{R_k(z^i,u^i)}i\)=\sum_{n\ge 1} z^n \sum_{T:|T|=n} u^{F(T)-f(T)} 
\end{equation} 

As in Section~\ref{SGT_random_vertex} we utilize the formula 
$\mathbb{E}Y_n = \sum_{k \geq 1} \mathbb{P}(Y_n \geq k)$ and express the
occurring probabilities as $\mathbb{P}(Y_n \geq k)=[z^n]S_k(z)/(n[z^n]T(z))$ with $S_k(z)$ being
the generating function whose $n$th coefficient is the cumulative number of $k$-protected nodes in
all trees of size $n$. Obviously, $( (\partial/\partial u) R_k) (z,1)=S_k(z)$ and thus 
by differentiating \eqref{Wagner_formula} with respect to $u$ and inserting $u=1$ we obtain 
\begin{equation} \label{funeq_Sk}
T(z)\sum_{i\ge 1} S_k(z^i) = S_k(z)-T_k(z).  
\end{equation} 
This implies  
\begin{equation} \label{S_k_Polya}
S_k(z)=\frac{T(z)\sum_{i\ge 2} S_k(z^i) + T_k(z)}{1-T(z)} \sim \frac{\sum_{i\ge 2} S_k(\rho^i) +
T_k(\rho)}{b \sqrt{1-\frac z\rho}}
\end{equation} 
where $b$ is the constant appearing in \eqref{Polya_Puiseux}. Standard transfer theorems applied
to \eqref{Polya_Puiseux} give 
\[
[z^n]T(z) \sim \frac{-b n^{-3/2} \rho^{-n}}{\Gamma(-1/2)} = \frac{b n^{-3/2}
\rho^{-n}}{2\sqrt{\pi}}, 
\]
and from \eqref{S_k_Polya} we get 
\[
[z^n]S_k(z)\sim \frac{\(\sum_{i\ge 2} S_k(\rho^i) + T_k(\rho)\) n^{-1/2} \rho^{-n}}{b\sqrt{\pi}}
\]
and thus 
\begin{equation} \label{above_expression}
\mathbb{P}(Y_n \geq k) \sim \frac{2}{b^2}\(\sum_{i\ge 2} S_k(\rho^i) + T_k(\rho)\).  
\end{equation} 

Since $T_k(\rho)$ decreases exponentially (\emph{cf.} remark after Theorem~\ref{thm:polyaexxn}),
and so does $\sum_{i\ge 2} S_k(\rho^i)$, these probabilities decrease 
exponentially and thus the series for $\mathbb{E}Y_n$, namely
\[
\mathbb{E}Y_n=\sum_{k\ge 1} \mathbb P(Y_n\ge k), 
\]
converges rapidly. But \eqref{above_expression}
still bears a secret, because we do not have an explicit expression for $S_k(z)$ and we cannot
solve the functional equation \eqref{funeq_Sk}. 

For numerical purposes, however, it is not necessary to have an explicit expression for $S_k(z)$.
If we write $S_k(z)=\Psi(S_k(z))$ with $\Psi$ being the operator on the ring of formal power
series defined by 
\[
\Psi(f(z))=\frac{T(z)\sum_{i\ge 2} f(z^i) + T_k(z)}{1-T(z)}, 
\]
then $\Psi$ is a contraction on the metric space $\mathbb R[[z]]$ equipped with the formal
topology (\emph{cf.} \cite[Appendix~A.5]{flajolet2009analytic}). Indeed, if $f(z)$ and $g(z)$
coincide up to their $\ell$th coefficient, then the first $2\ell+2$ coefficients of $\Psi(f(z))$
and$\Psi(g(z))$ coincide. 

As there is exactly one tree with $k+1$ vertices which possesses $k$-protected vertices at all
(namely the path of length $k$ has a $k$-protected root) whereas all smaller trees do not possess
any $k$-protected vertices, we know that the (one-term) series $z^{k+1}$ coincides with
$S_k(z)=z^{k+1}+\cdots$ in its first $k+2$ coefficients. Applying $\Psi$ to $z^{k+1}$ a few times,
with each application more than doubling the number of known coefficients of $S_k(z)$, gives
quickly a fairly accurate expression for $S_k(z)$. We obtain the following theorem:

\begin{thm}
Let $Y_n$ be the protection number of a random vertex in a random P\'olya tree of size $n$. Then
\begin{align*}
\lim_{n \to \infty} \mathbb{E}Y_n = \sum_{k \geq 1} \frac{2}{b^2}\(\sum_{i\ge 2} S_k(\rho^i) +
T_k(\rho)\) \approx 0.9953254987
\end{align*}
and
$\lim_{n \to \infty} \mathbb{V}Y_n \approx 
1.3818769746.$
\end{thm}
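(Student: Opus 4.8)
The plan is to read off the limit directly from the estimate \eqref{above_expression} already established. For each fixed $k$ it gives
\[
\mathbb{P}(Y_n\ge k)\ \longrightarrow\ p_k:=\frac{2}{b^2}\Bigl(\sum_{i\ge 2}S_k(\rho^i)+T_k(\rho)\Bigr)\qquad(n\to\infty),
\]
and, as recorded in the discussion preceding the theorem, $T_k(\rho)$ and $\sum_{i\ge 2}S_k(\rho^i)$ decay exponentially in $k$, so $\sum_{k\ge 1}p_k<\infty$. Granting that one may interchange the limit $n\to\infty$ with the summation over $k$ in $\mathbb{E}Y_n=\sum_{k\ge 1}\mathbb{P}(Y_n\ge k)$, we get $\lim_{n\to\infty}\mathbb{E}Y_n=\sum_{k\ge 1}p_k$, which is the asserted formula. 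For the variance I would proceed exactly as in the simply generated case: from $\mathbb{E}(Y_n^2)=\sum_{k\ge 1}(2k-1)\mathbb{P}(Y_n\ge k)$ and $\mathbb{V}Y_n=\mathbb{E}(Y_n^2)-(\mathbb{E}Y_n)^2$ one obtains $\lim_{n\to\infty}\mathbb{V}Y_n=\sum_{k\ge 1}(2k-1)p_k-\bigl(\sum_{k\ge 1}p_k\bigr)^2$, the factor $2k-1$ being harmless because $p_k$ is exponentially small.

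The substantive point is the legitimacy of the two interchanges, which requires a bound $\mathbb{P}(Y_n\ge k)\le\beta_k$, uniform in $n$, with $\sum_{k\ge 1}k\beta_k<\infty$. I would derive it by combining \eqref{funeq_Sk} with a uniform transfer estimate. Write $S_k(z)=h_k(z)/(1-T(z))$ with $h_k(z)=T_k(z)+T(z)\sum_{i\ge 2}S_k(z^i)$; on a fixed $\Delta$-domain at $\rho$ the function $h_k$ is analytic (the series $\sum_{i\ge 2}S_k(z^i)$ extends analytically to the strictly larger disk $|z|<\sqrt\rho$), its value at $\rho$ is $c_k:=T_k(\rho)+\sum_{i\ge 2}S_k(\rho^i)$, and its singular part there has Puiseux coefficients of size $\mathcal{O}(\nu^k)$ for some $\nu<1$. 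These smallness properties reduce to the exponential decay in $k$ of $T_k(\rho)$, of the Puiseux coefficient of $T_k$ at $\rho$ (which satisfies $\gamma_{k+1}=(T_k(\rho)+\rho)\gamma_k$, a recursion whose factors are eventually bounded away from $1$), and of $\sup_{|z|\le\rho+\eta}\bigl|\sum_{i\ge 2}S_k(z^i)\bigr|$ (a tail of a fixed convergent series). Since $1/(1-T(z))$ behaves like $1/(b\sqrt{1-z/\rho})$ near $\rho$ and is analytic and bounded on the remainder of the $\Delta$-domain, one concludes $|S_k(z)|\le C\nu^k|1-z/\rho|^{-1/2}$ uniformly in $k$, whence the transfer theorem yields $[z^n]S_k(z)\le C'\nu^k n^{-1/2}\rho^{-n}$ for all $k,n$; dividing by $n[z^n]T(z)\asymp n^{-1/2}\rho^{-n}$ gives $\mathbb{P}(Y_n\ge k)\le C''\nu^k$ uniformly in $n$, and dominated convergence finishes both limits. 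Alternatively one may sidestep this by invoking the additive-functionals framework of \cite{Wa15}, which supplies uniform estimates of exactly this kind; I expect this uniform control of $S_k$ near $\rho$ to be the most delicate part to write out carefully.

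For the numerical values there is nothing new beyond the recipe already given before the theorem. The quantities $T_k(\rho)$ are obtained from the recurrence \eqref{equ:polyatk} starting from $T_0(\rho)=T(\rho)=1$, the values $T_k(\rho^i)$ with $i\ge 2$ entering it being read off from the power series of $T_k(z)$ since $\rho^i$ lies well inside the disk of convergence. Each $S_k(\rho^i)$ is computed by applying the contraction $\Psi$ introduced above a few times to the one-term approximation $z^{k+1}$ (each application more than doubling the number of correct coefficients of $S_k(z)$) and then evaluating at $\rho^i$. Truncating the outer sum over $k$ and the inner sum over $i$ with the help of the exponential tail bounds established above yields the stated approximations $\lim_{n\to\infty}\mathbb{E}Y_n\approx 0.9953254987$ and $\lim_{n\to\infty}\mathbb{V}Y_n\approx 1.3818769746$ with controllable error.
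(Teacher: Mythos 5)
Your proposal follows essentially the same route as the paper: the limit is read off from the asymptotics $\mathbb{P}(Y_n\ge k)\sim\frac{2}{b^2}\bigl(\sum_{i\ge 2}S_k(\rho^i)+T_k(\rho)\bigr)$ obtained via the functional equation \eqref{funeq_Sk}, the exponential decay in $k$ justifies summing, the variance is handled through $\sum_{k\ge1}(2k-1)\mathbb{P}(Y_n\ge k)-(\mathbb{E}Y_n)^2$, and the numerics use the contraction $\Psi$ exactly as described before the theorem. Your additional sketch of a uniform-in-$n$ bound $\mathbb{P}(Y_n\ge k)\le C\nu^k$ via a uniform transfer estimate merely makes explicit the interchange of limit and sum that the paper leaves implicit, so it is a welcome refinement rather than a different argument.
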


\section{Non-plane binary trees}\label{NPBT}

\subsection{Protection number of the root}

We denote by $T(z)$ the generating function of non-plane binary trees, where $z$ marks the number of internal nodes. Then $T(z)$ satisfies
\begin{align*}
T(z)=1+z \left( \frac{1}{2} T(z)^2 + \frac{1}{2} T(z^2) \right).
\end{align*}

The generating function $T_k(z)$ of non-plane binary trees with protection number at least $k$ fulfills
\begin{align*}
T_k(z)=z \left( \frac{1}{2} T_{k-1}(z)^2 + \frac{1}{2} T_{k-1}(z^2) \right),
\end{align*}
and $T_0(z)=T(z)$.

In order to obtain the asymptotic mean and variance for the protection number of a random non-plane binary tree of size $n$ we proceed analogously as in the previous section for P\'olya trees. Thus, we use
\begin{align*}
\mathbb{E}X_n = \sum_{k \geq 1} \prod_{i=1}^k \mathbb{P}(X_n \geq k | X_n \geq k-1) = \sum_{k \geq 1} \prod_{i=1}^k \frac{[z^n]T_k(z)}{[z^n]T_{k-1}(z)}.
\end{align*}

\begin{thm}
Let $X_n$ be the protection number of a random non-plane binary tree of size $n$. Then
\begin{align*}
\lim_{n \to \infty} \mathbb{E}X_n = \sum_{k \geq 1} \prod_{i=1}^{k-1} (\rho T_i(\rho)) 
\approx 1.707603060723366
\end{align*}
and
$\lim_{n \to \infty} \mathbb{V}X_n 
\approx 
0.431102549825064.$
\end{thm}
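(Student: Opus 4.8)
The plan is to follow the P\'olya-tree argument of Section~\ref{PT} almost verbatim; the only structural change is that the recurrence $T_k(z)=\frac z2 T_{k-1}(z)^2+\frac z2 T_{k-1}(z^2)$ carries no additive $-z$ term. First I would record the two supporting lemmas. The analogue of Lemma~\ref{lem:samesing} is proved by induction on $k$: since $T_{k-1}$ has radius of convergence $\rho$, the map $z\mapsto T_{k-1}(z^2)$ is analytic for $|z|<\sqrt\rho$, hence in a full neighbourhood of $\rho$, so the dominant singularity of $T_k$ comes entirely from $\frac z2 T_{k-1}(z)^2$; squaring a function with a square-root singularity whose constant term $T_{k-1}(\rho)$ is positive and multiplying by the analytic factor $z$ changes neither the location nor the type of the singularity, while positivity of all the quantities involved guarantees (exactly as in the proof of Lemma~\ref{lem:samesing}) that the leading square-root coefficient is nonzero and has the correct sign. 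The second ingredient is the coefficient asymptotics, the analogue of Lemma~\ref{lem:expansionstktk+1}: writing $T_{k-1}(z)=T_{k-1}(\rho)-\gamma_k\sqrt{1-z/\rho}+\cdots$ with $\gamma_k>0$ (which starts from the classical Puiseux expansion of $T_0(z)=T(z)$), I substitute $z=\rho(1-\eta^2)$, $\eta=\sqrt{1-z/\rho}$, into the recurrence and compare coefficients of $\eta^0$ and $\eta^1$. The $\eta^0$-comparison merely reproduces the recurrence at $z=\rho$, whereas the $\eta^1$-comparison gives $\gamma_{k+1}=\rho\,T_{k-1}(\rho)\,\gamma_k$; the term $\frac z2 T_{k-1}(z^2)$ contributes nothing at order $\eta^1$ because $z^2=\rho^2(1-\eta^2)^2$ has vanishing $\eta$-derivative at $\eta=0$. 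A transfer theorem then yields $[z^n]T_k(z)\sim -\gamma_{k+1}\,\rho^{-n}n^{-3/2}/\Gamma(-1/2)$.

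Combining the two lemmas,
\[
\frac{[z^n]T_k(z)}{[z^n]T_{k-1}(z)}=\mathbb{P}(X_n\ge k\mid X_n\ge k-1)\longrightarrow\frac{\gamma_{k+1}}{\gamma_k}=\rho\,T_{k-1}(\rho)\qquad(n\to\infty),
\]
so that $\mathbb{P}(X_n\ge k)=\prod_{i=1}^{k}\frac{[z^n]T_i(z)}{[z^n]T_{i-1}(z)}\to\prod_{i=1}^{k}\rho\,T_{i-1}(\rho)$. Here I would invoke the defining equation of the square-root singularity of $T(z)$: writing $T(z)=1+\frac z2 T(z)^2+\frac z2 T(z^2)$ with $T(z^2)$ analytic near $\rho$, the singularity condition is $\rho\,T(\rho)=1$ (the exact analogue of $1=\rho\phi'(T(\rho))$ for simply generated trees), so the $i=1$ factor equals $\rho\,T_0(\rho)=\rho\,T(\rho)=1$ and the product telescopes to $\prod_{i=1}^{k-1}\rho\,T_i(\rho)$. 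Since $(T_i(\rho))_{i\ge0}$ is decreasing (trees with protection number $\ge i$ form a subfamily of those with protection number $\ge i-1$, so $T_i(z)\preceq T_{i-1}(z)$ coefficientwise) and $\rho\,T_1(\rho)<\rho\,T(\rho)=1$, these products decay geometrically; hence the terms of $\sum_{k\ge1}\mathbb{P}(X_n\ge k)$ and $\sum_{k\ge1}(2k-1)\mathbb{P}(X_n\ge k)$ are dominated uniformly in $n$ by a summable sequence, one may pass to the limit term by term and obtain
\[
\lim_{n\to\infty}\mathbb{E}X_n=\sum_{k\ge1}\prod_{i=1}^{k-1}\rho\,T_i(\rho),
\]
together with, via $\mathbb{E}(X_n^2)=\sum_{k\ge1}(2k-1)\mathbb{P}(X_n\ge k)$,
\[
\lim_{n\to\infty}\mathbb{V}X_n=\sum_{k\ge1}(2k-1)\prod_{i=1}^{k-1}\rho\,T_i(\rho)-\Bigl(\lim_{n\to\infty}\mathbb{E}X_n\Bigr)^{2}.
\]
Numerically evaluating these rapidly converging series — with $T_i(\rho)$ computed from the recurrence and the values $T_i(\rho^2)$ that occur therein computed from the power series of $T_i$ at the interior point $\rho^2$, as in the remark following Theorem~\ref{thm:polyaexxn} — produces the stated constants.

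The main difficulty is the soft-analytic bookkeeping rather than any single computation. One must check that each $T_k(z)$ continues analytically to a $\Delta$-domain at $\rho$, so that the transfer theorem is applicable; this follows by propagating the classical $\Delta$-analyticity of $T(z)$ (from the Otter/P\'olya-type analysis) through the recurrence, once more using that $T_{k-1}(z^2)$ is analytic past $\rho$. One must also justify the interchange of $\lim_n$ with $\sum_k$, which requires a bound $\mathbb{P}(X_n\ge k)\le C r^{k}$ with $r<1$ independent of $n$; such a bound follows from the coefficientwise domination $T_k(z)\preceq T_{k-1}(z)$ together with a crude estimate for $[z^n]T_1(z)/[z^n]T(z)$, in the same spirit as the geometric comparison already carried out for simply generated trees. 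Everything else runs strictly in parallel with the P\'olya case.
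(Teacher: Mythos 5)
Your proposal is correct and follows essentially the same route as the paper: the conditional-probability decomposition $\mathbb{P}(X_n\ge k\mid X_n\ge k-1)=[z^n]T_k(z)/[z^n]T_{k-1}(z)$, Puiseux expansions propagated through the recurrence (with $T_{k-1}(z^2)$ analytic at $\rho$), transfer, and the second-moment formula for the variance. You even make explicit the step the paper leaves implicit, namely that the singularity condition gives $\rho T(\rho)=1$ so the product reduces to $\prod_{i=1}^{k-1}\rho T_i(\rho)$.
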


\begin{proof}
Let the Puiseux expansion of $T_k(z)$ and $T_{k+1}(z)$ read as
\begin{align*}
T_{k-1}(z) = T_{k-1}(\rho) - \gamma_k \sqrt{1-\frac{z}{\rho}} + \mathcal{O} \left( 1- \frac{z}{\rho} \right),
\end{align*}
and
\begin{align*}
T_{k}(z) = \rho \left( \frac{1}{2} T_{k-1}(\rho)^2 +\frac{1}{2} T_{k-1}(\rho^2) \right) + \rho T_{k-1}(\rho) \gamma_k \sqrt{1-\frac{z}{\rho}} + \mathcal{O} \left( 1- \frac{z}{\rho} \right)
\end{align*}

Using singularity analysis yields the desired result for the mean. For the variance we use again
the formula $\mathbb{V}X_n = \sum_{k\ge 1} (2k-1)\mathbb P(X_n\ge k)-\mathbb E(X_n)^2$.
\end{proof}

\subsection{Protection number of a random internal vertex}

The asymptotic mean and variance for the protection number of a randomly chosen internal vertex in a random non-plane binary tree can be obtained in the same way as in the previous section for P\'olya trees.

Thus, we again set up an equation for the generating function $R_k(z,u)$ where the coefficients $[z^nu^l]R_k(z,u)$ count the number of non-plane binary trees of size $n$ with $l$ $k$-protected vertices:
\begin{align*}
\frac{z}{2} \( R_k(z,u)^2+R_k(z^2,u^2) \) = \sum_{n \geq 1} z^n \sum_{T:|T|=n} u^{F(T)-f(T)}
\end{align*}
Differentiating this equation with respect to $u$ and setting $u=1$ yields
\begin{align*}
zT(z)S_k(z)+zS_k(z^2)=S_k(z)-T_k(z).
\end{align*}
Therefore we get
\begin{align*}
S_k(z)=\frac{zS_k(z^2)+T_k(z)}{1-zT(z)}.
\end{align*}
The asymptotic expansion of $T(z)$ is given by
\begin{align*}
T(z) \sim \frac{1}{\rho} - a\sqrt{1-\frac{z}{\rho}}.
\end{align*}
In \cite[p.~477]{flajolet2009analytic} we find the numerical values of the constants $\rho$ and
$a$. (\emph{Caveat}: The scaling is different, so \cite[p.~477]{flajolet2009analytic} in fact
lists $a\cdot\rho$, not $a$.) We have
$\rho\approx 0.4026975036714412909690453$ and $a\approx 2.8061602222420538943722824$.
Using this expansion we get 
\begin{align*}
\mathbb{P}(Y_n \geq k) = \frac{[z^n]S_k(z)}{n[z^n]T(z)} \sim \frac{2}{a^2\rho} \( \rho S_k(\rho^2)+T_k(\rho) \).
\end{align*}

By denoting $\Psi(f(z))=\frac{zf(z^2)+T_k(z)}{1-zT(z)}$ we can use the same arguments as in the
P\'olya case to efficiently obtain numerical values for the probabilities $\mathbb{P}(Y_n \geq k)$. Finally, we are able to calculate the asymptotic mean and variance for the protection number of a random node in non-plane binary trees.

\begin{thm}
Let $Y_n$ be the protection number of a random internal vertex in a random non-plane binary tree of size $n$. Then
\begin{align*}
\lim_{n \to \infty} \mathbb{E}Y_n &= \frac{2}{a^2\rho} 
\sum_{k \geq 1} \( \rho S_k(\rho^2)+T_k(\rho) \)
\approx 1.3124128299,
\end{align*}
and $\lim_{n \to \infty} \mathbb{V}Y_n \approx 
0.2676338724.$
\end{thm}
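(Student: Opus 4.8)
The plan is to mirror the structure of the preceding analysis for $X_n$ in the non-plane binary case, this time using the functional equation for $S_k(z)$ together with the Puiseux expansion of $T(z)$. First I would record that, by the identity $zT(z)S_k(z)+zS_k(z^2)=S_k(z)-T_k(z)$ derived above, we have
\begin{align*}
S_k(z)=\frac{zS_k(z^2)+T_k(z)}{1-zT(z)}.
\end{align*}
Since $T(z)$ has a square-root singularity at $\rho$ with $1-\rho T(\rho)=0$ and $T(z)\sim \frac1\rho - a\sqrt{1-\frac z\rho}$, the denominator vanishes like $a\rho\sqrt{1-\frac z\rho}$, while the numerator is analytic at $\rho$ (because $z^2\mapsto\rho^2$ stays strictly inside the disc of convergence, and $T_k(z)$ is itself analytic up to $\rho$ by the root lemma). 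Hence $S_k(z)\sim\bigl(\rho S_k(\rho^2)+T_k(\rho)\bigr)/\bigl(a\rho\sqrt{1-z/\rho}\bigr)$, which is exactly the expansion already stated. Applying standard transfer theorems to this and to the expansion of $T(z)$ gives $[z^n]S_k(z)/(n[z^n]T(z))\to \frac{2}{a^2\rho}\bigl(\rho S_k(\rho^2)+T_k(\rho)\bigr)$, i.e. the displayed asymptotics for $\mathbb P(Y_n\ge k)$.

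Next I would justify summability and pass to the limit inside $\mathbb EY_n=\sum_{k\ge1}\mathbb P(Y_n\ge k)$. From the root analysis $T_k(\rho)$ decays exponentially in $k$; and since the only tree with $k+1$ vertices carrying a $k$-protected vertex is the path, one gets $S_k(z)=z^{k+1}+\cdots$, so $S_k(\rho^2)=O(\rho^{2k})$ decays exponentially too. Therefore $\mathbb P(Y_n\ge k)$ is bounded, uniformly in $n$ for $n$ large, by a geometric term, and dominated convergence yields
\begin{align*}
\lim_{n\to\infty}\mathbb EY_n=\frac{2}{a^2\rho}\sum_{k\ge1}\bigl(\rho S_k(\rho^2)+T_k(\rho)\bigr).
\end{align*}
For the variance, the same $\mathbb P(Y_n\ge k)$ asymptotics feed into $\mathbb E(Y_n^2)=\sum_{k\ge1}(2k-1)\mathbb P(Y_n\ge k)$, and since $(2k-1)$ times a geometrically decaying sequence is still summable, $\lim_{n\to\infty}\mathbb VY_n=\frac{2}{a^2\rho}\sum_{k\ge1}(2k-1)\bigl(\rho S_k(\rho^2)+T_k(\rho)\bigr)-\bigl(\lim_{n\to\infty}\mathbb EY_n\bigr)^2$, which is what has to be shown up to the numerical evaluation.

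The remaining point, and the only genuinely delicate one, is the numerical value: we have no closed form for $S_k$ and cannot solve its functional equation. Here I would invoke the contraction argument already set up in the P\'olya section: writing $S_k(z)=\Psi(S_k(z))$ with $\Psi(f(z))=\bigl(zf(z^2)+T_k(z)\bigr)/\bigl(1-zT(z)\bigr)$, the operator $\Psi$ is a contraction in the formal topology on $\mathbb R[[z]]$ — if $f$ and $g$ agree up to degree $\ell$, then $zf(z^2)$ and $zg(z^2)$ agree up to degree $2\ell+1$, hence so do $\Psi(f)$ and $\Psi(g)$ — and iterating $\Psi$ starting from $z^{k+1}$ more than doubles the number of correct coefficients at each step, so a handful of iterations suffices to evaluate $S_k(\rho^2)$ to high precision (note $\rho^2$ lies well inside the disc of convergence, so the truncated series converges at an exponential rate). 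Likewise $T_k(\rho)$ is computed directly from its recurrence rather than from a series expansion at the singularity. Combining these with the quoted values of $\rho$ and $a$ gives the stated constants $1.3124128299$ and $0.2676338724$. The main obstacle is purely bookkeeping: making sure the error from truncating both the $k$-sum and the $\Psi$-iteration is controlled well below the claimed accuracy, which the exponential decay in $k$ and the coefficient-doubling of $\Psi$ make routine.
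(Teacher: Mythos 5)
Your argument is essentially the paper's: derive $S_k(z)=\frac{zS_k(z^2)+T_k(z)}{1-zT(z)}$, note the numerator is analytic at $\rho$ while the denominator vanishes like $a\rho\sqrt{1-z/\rho}$, transfer to get $\mathbb P(Y_n\ge k)\sim \frac{2}{a^2\rho}\bigl(\rho S_k(\rho^2)+T_k(\rho)\bigr)$, sum over $k$ (and use $\sum_k(2k-1)\mathbb P(Y_n\ge k)$ for the second moment), and evaluate numerically via the contraction $\Psi$, exactly as the paper does by carrying over the P\'olya-case arguments.

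One detail is wrong, though it does not damage the conclusion: the statement that the path is the unique minimal tree with a $k$-protected vertex, so that $S_k(z)=z^{k+1}+\cdots$, is the P\'olya-tree fact and does not hold here. In a non-plane \emph{complete} binary tree every internal vertex has two children, so the smallest tree containing a $k$-protected vertex is the complete binary tree of height $k$, which has $2^k-1$ internal vertices; hence $S_k(z)$ starts at degree $2^k-1$, the seed for the $\Psi$-iteration must be chosen accordingly, and $S_k(\rho^2)$ in fact decays much faster than the geometric bound $O(\rho^{2k})$ you state, so summability and the dominated-convergence step are only strengthened.
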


\section{Conclusion}

In this paper we generalized the work of Heuberger and Prodinger, who derived the average
protection number of plane trees, to a more general framework. We obtained the average protection
number for all simply generated trees, as well as for P\'olya trees and non-plane binary trees. We
did not include P\'olya trees with general degree restrictions, since the general expressions will
look clumsy and only numerical results for specific classes may be of interest. But it is
immediate that the asymptotic mean and variance of the protection number for P\'olya-trees with
any kind of degree restriction can be calculated in the very same way. As we saw in some of the
examples, there are classes of trees, for which the obtained formulas involve a recurrence that
might not be solvable explicitly. However, using these equations it is possible to calculate the
asymptotic mean and variance in an arbitrarily accurate way with a fairly low computational
effort. In Table~\ref{tab:meanprotnr} we summarize the obtained results for some specific tree
classes.

\begin{table}[h!]
\begin{tabular}{l|c|c}
Tree model & $\lim_{n \to \infty} \mathbb{E}X_n$ & $\lim_{n \to \infty} \mathbb{E}Y_n$ \\
\hline
\textbf{Simply generated trees} &&\\
Plane trees & 1.62297 & 0.72765 \\
Motzkin trees & 2.54638 & 1.30760 \\
Incomplete binary trees & 3.53647 & 1.99182 \\
Cayley trees & 2.28620 & 1.18652 \\
Complete binary trees & 1.56298 & 1.26568 \\
\textbf{Non-plane trees} & & \\
P\'olya trees & 2.15489 & 0.99532 \\
Non-plane binary trees & 1.70760 & 1.31241
\end{tabular}
\vspace{3mm}
\caption{Summary of the obtained mean values for the protection numbers.}
\label{tab:meanprotnr}
\end{table}

It is well known that Cayley trees and P\'olya trees are very similar, but the latter are not
simply generated, as the simple proof presented in \cite{DrGi10} shows. A detailed analysis of the
structural differences was done in \cite{GJW,PaSt18}: Roughly speaking, P\'olya trees are Cayley
tree (more precisely, the simply generated class whose ordinary generating function is the
exponential generating function of Cayley trees) with small forests attached to each vertex. 
Comparing the resulting values (from Table \ref{tab:meanprotnr}) for Cayley trees and P\'olya
trees shows the quantitative effect of those forests, which have on average less than one vertex.
As expected, these additional forests decrease the protection numbers.

For complete binary trees the correspondence between plane and non-plane is a bit different due to
the strict degree constraint. The small forests are not attached anywhere, but they always
consist of two identical trees and attachment is done by replacing a leaf. The effect of the
presence or absence of symmetries seems stronger than the possible increase of the protection
number by adding forests, because the larger number of plane structures gives some bias to lower
protection numbers.

\bibliographystyle{plain}
\bibliography{references}

\end{document}